\documentclass[a4paper, twoside,12pt]{article}
\usepackage{fancyhdr}
\usepackage{fnpos}
 \usepackage[english]{babel}        
\usepackage[T1]{fontenc}          
\usepackage{graphicx}             
\usepackage{makeidx}
\usepackage{fancybox}
\usepackage{framed}
\usepackage{fancyhdr}
 \usepackage{pstricks,pst-plot,pstricks-add}
\usepackage[margin=1in]{geometry}
\usepackage{graphicx}
\usepackage{titlesec}
\usepackage{amsmath}
\usepackage{amsfonts}   
\usepackage{amssymb}    
\usepackage{amsthm}
\usepackage{dsfont}
\usepackage{mathtools}
\usepackage{verbatim}   
\usepackage{color}
\usepackage{listings}
\usepackage{graphicx}
\usepackage{amsmath}
\usepackage{amsmath,amssymb,color,inputenc,euscript,graphicx,psfrag}

\usepackage{amsmath, amsthm, amscd, amsfonts, amssymb, graphicx, color,   mathrsfs}
\usepackage[english]{babel}
    \usepackage[bookmarksnumbered, colorlinks, plainpages]{hyperref}
    \hypersetup{colorlinks=true,linkcolor=blue, anchorcolor=blue, citecolor=blue, urlcolor=red, filecolor=magenta, pdftoolbar=true}

\setlength{\topmargin}{-0.3in}
\setlength{\topskip}{0.3in}
\setlength{\textheight}{9.5in}
\setlength{\textwidth}{6in}
\setlength{\oddsidemargin}{0.1in}
\setlength{\evensidemargin}{0.1in}


\newtheorem{thm}{Theorem}[section]

\newtheorem{lem}[thm]{Lemma}
\newtheorem{prop}[thm]{Proposition}
\newtheorem{defn}[thm]{Definition}
\newtheorem{rem}[thm]{Remark}
\numberwithin{equation}{section}

\renewcommand{\thefootnote}

\renewcommand\Im{\operatorname{Im}}

 \pagestyle{fancy} \pagestyle{myheadings}{\markboth{\sl A. Saoudi }{\sl A variation of the$L^p$ uncertainty principles }



\author {Ahmed Saoudi}

\title{ A variation of the $L^p$ uncertainty principles for the Weinstein transform}

\date{}
 \begin{document}
 \maketitle
\begin{center}
     Northern Border University, College of Science, Arar, P.O. Box 1631, Saudi Arabia.\\
   Universit\'{e} de Tunis El Manar, Facult\'{e} des sciences de Tunis,\\ Laboratoire de Fonctions Sp\'{e}ciales,
        analyse Harmonique et Analogues,\\ LR13ES06, 2092 El Manar I, Tunisie.\\
     \textbf{ e-mail:} ahmed.saoudi@ipeim.rnu.tn
\end{center}
  \begin{abstract}
The Weinstein operator  has several applications in pure and applied Mathematics especially in Fluid Mechanics and  satisfies some uncertainty principles similar to the Euclidean Fourier transform. The aim of this paper is establish a generalization of uncertainty principles for Weinstein transform in $L_\alpha^p$-norm. Firstly, we extend the Heisenberg-Pauli-Weyl uncertainty principle to more general case. Then we establish three continuous uncertainty principles of concentration type. The first and the second uncertainty principles are $L_\alpha^p$ versions and depend on the sets of concentration $\Omega$ and $\Sigma$, and on the time function $\varphi$. However, the third uncertainty principle is also $L_\alpha^p$ version depends on the sets of concentration and he is independent on the band limited function $\varphi$. These $L_\alpha^p$-Donoho-Stark-type inequalities generalize the results obtained in the case $p=q=2$.\\ \\

 \textbf{ Keywords}. Weinstein operator, Heisenberg-Pauli-Weyl-type inequality, Concentration uncertainty principles,  Donoho-Stark-type inequality. \\
\textbf{Mathematics Subject Classification}. Primary 43A32; Secondary 44A15
 \end{abstract}
  \section{ Introduction}
Uncertainty principles appear in harmonic analysis and signal theory in a variety of different forms
involving not only the  signal $\varphi$  and its Fourier transform $\mathcal{F}(\varphi)$, but essentially every representation of a signal in the time-frequency space. They  are mathematical results that give limitations on the simultaneous concentration of a signal and its Fourier transform and they have implications in signal analysis and  quantum physics. In quantum physics they tell us that a particle's speed and position cannot both be measured with infinite precision. In signal analysis they tell us that if we observe a signal only for a finite period of time, we will lose information about the frequencies the signal consists of. Timelimited functions and bandlimited functions are basic tools of signal and image
processing. Like, the simple form of the uncertainty principle tells us that a signal cannot be simultaneously time and bandlimited. This leads to the investigation
of the set of almost time and almost bandlimited functions, which has been initially
by Landau, Pollak \cite{landau1975szego, landau1962prolate} and then by Donoho, Stark \cite{donoho1989uncertainty}.

Motivated by the work of Laeng and Morpurgo \cite{laeng1999uncertainty, morpurgo2001extremals} and the work of Soltani and Ghazwani \cite{zbMATH06504466, zbMATH06692262}, we propose an extension of the techniques and results of Ghobber \cite{ghobber2013uncertainty, ghobber2014variations} to establish a variation of the $L^p$ uncertainty principles for the Weinstein transform.

Many uncertainty principles have already been proved for the Weinstein transform $\mathcal{F}_{W,\alpha}
$,  \cite{mejjaoli2012weinstein, mejjaoli2011uncertainty, salem2015heisenberg}. The authors have established in \cite{mejjaoli2011uncertainty} the Heisenberg-Pauli-Weyl inequality for the Weinstein transform, by showing that, for every $\varphi$ in $L^2_\alpha(\mathbb{R}^{d+1}_+)$
\begin{equation}\label{firstuncert}
\|\varphi\|_{\alpha, 2}\leq \frac{2}{2\alpha+d+2}
\||x|\varphi\|_{\alpha, 2}\||y|\mathcal{F}_{W,\alpha}(\varphi)\|_{\alpha, 2}.
\end{equation}

Our first result will be the following variation of Heisenberg-Pauli-Weyl-type inequality.

 \medskip
 \noindent{\bf Theorem A.}\
 {\sl  Let $1<p\leq 2$, $q=p/(p-1)$,  $0<s<(2\alpha+d+2)/q$ and $t>0$, then for all $\varphi\in L^p_\alpha(\mathbb{R}^{d+1}_+)$,
 \begin{equation*}\label{LpHPWU}
   \left\|\mathcal{F}_{W,\alpha}(\varphi)\right\|_{\alpha,q}  \leq K(s,t) \left\||x|^s\varphi\right\|_{\alpha,p}^\frac{t}{s+t}\left\||y|^t\mathcal{F}_{W,\alpha}(\varphi)\right\|_
   {\alpha,q}^\frac{s}{s+t},
 \end{equation*}
 where $K(s,t)$ is a positive constant.}

 This theorem implies in particular that if $\varphi$ is highly localized in the neighbourhood of $x=0$,
then $\mathcal{F}_{W,\alpha}(\varphi)$ cannot be concentrated in the neighbourhood of $y=0$. In particular, for $p=q=2$, we obtain the general case of Heisenberg-Pauli-Weyl-type inequality (\ref{firstuncert})
 \begin{equation*}
   \left\|\mathcal{F}_{W,\alpha}(\varphi)\right\|_{\alpha,2}  \leq K(s,t) \left\||x|^s\varphi\right\|_{\alpha,2}^\frac{t}{s+t}\left\||y|^t\mathcal{F}_{W,\alpha}(\varphi)\right\|_
   {\alpha,2}^\frac{s}{s+t}.
 \end{equation*}

The second and third results are two continuous-time uncertainty principles of concentration type depend on the sets of concentration $\Omega$ and $\Sigma$, and on the time function $\varphi$.

 \medskip
 \noindent{\bf Theorem B.}\
  {\sl  Let $\Omega$, $\Sigma$ be a measurable subsets of $\mathbb{R}^{d+1}_+$ and  $\varphi\in L^p_\alpha(\mathbb{R}^{d+1}_+)$, $1\leq p\leq 2$. If $\varphi$ is $\varepsilon_\Omega$-concentrated to $\Omega$ in $ L^p_\alpha(\mathbb{R}^{d+1}_+)$-norm and  $\mathcal{F}_{W,\alpha}(\varphi)$ is $\varepsilon_\Sigma$-concentrated to $\Sigma$ in $ L^q_\alpha(\mathbb{R}^{d+1}_+)$-norm, $q=p/(p-1)$, then we have
 \begin{equation*}
    \left\|\mathcal{F}_{W,\alpha}(\varphi)\right\|_{\alpha,q}\leq \frac{ \left(\mu_{\alpha}(\Sigma)\right)^\frac{1}{q} \left(\mu_{\alpha}(\Omega)\right)^\frac{1}{q}+\varepsilon_\Omega}{1-\varepsilon_\Sigma}\left\|\varphi
    \right\|_{\alpha,p}.
 \end{equation*} }

   The statement of Theorem B depends on the time function $\varphi$. However although for $p=q=2$, the continuous-time uncertainty principle becomes
 \begin{equation*}
    1-\varepsilon_\Omega-\varepsilon_\Sigma\leq \mu_\alpha(\Omega)^\frac{1}{2}\mu_\alpha(\Sigma)^\frac{1}{2},
  \end{equation*}

 \medskip
 \noindent{\bf Theorem C.}\
 {\sl   Let $\Omega$, $\Sigma$ be a measurable subsets of $\mathbb{R}^{d+1}_+$ and  $\varphi\in (L^1_\alpha\cap L^p_\alpha)(\mathbb{R}^{d+1}_+)$, $1\leq p\leq 2$. If $\varphi$ is $\varepsilon_\Omega$-concentrated to $\Omega$ in $ L^p_\alpha(\mathbb{R}^{d+1}_+)$-norm and  $\mathcal{F}_{W,\alpha}(\varphi)$ is $\varepsilon_\Sigma$-concentrated to $\Sigma$ in $ L^q_\alpha(\mathbb{R}^{d+1}_+)$-norm, $q=p/(p-1)$, then we have
   \begin{equation*}
    \left\|\mathcal{F}_{W,\alpha}(\varphi)\right\|_{\alpha,q}\leq \frac{ \left(\mu_{\alpha}(\Sigma)\right)^\frac{1}{q} \left(\mu_{\alpha}(\Omega)\right)^\frac{1}{q}}{(1-\varepsilon_\Omega)(1-\varepsilon_\Sigma)}\left\|\varphi
    \right\|_{\alpha,p}.
 \end{equation*} }

 Likewise, the statement of the pervious Theorem depends on the time function $\varphi$ and Like the first  continuous-time uncertainty principle(Theorem B) is independent on $\varphi$ for $p=q=2$, and we have
 \begin{equation*}
    (1-\varepsilon_\Omega)(1-\varepsilon_\Sigma)\leq \mu_\alpha(\Omega)^\frac{1}{2}\mu_\alpha(\Sigma)^\frac{1}{2}.
  \end{equation*}

The last result is a continuous-bandlimited uncertainty principle of
concentration type depends on the sets of concentration
$\Omega$ and $\Sigma$, but he is independent on the bandlimited function.


 \medskip
 \noindent{\bf Theorem D.}\
  Let $\Omega$, $\Sigma$ be a measurable subsets of $\mathbb{R}^{d+1}_+$ and let $\varphi\in L^p(\mathbb{R}_+^d)$ such that  $1\leq p\leq 2$. Then if $\varphi$ is  $\varepsilon_\Omega$-concentrated to $\Omega$ and $\varepsilon_\Sigma$-bandlimited to $\Sigma$ in $L^p_\alpha$-norm, we have
  \begin{equation*}
     1-\varepsilon_\Omega-\varepsilon_\Sigma\leq  (1+\varepsilon_\Sigma)\left(\mu_{\alpha}(\Sigma)\right)^\frac{1}{p} \left(\mu_{\alpha}(\Omega)\right)^\frac{1}{p}.
   \end{equation*}

The main body of the paper is organized as follows. In section 2, we recall some harmonic analysis results related to the Weinstein operator. In section 3, we prove a variation of  Heisenberg-Pauli-Weyl uncertainty principle for the Weinstein operator. Finally in section 4, we establish three continuous uncertainty principles of concentration type  in  $L^p_\alpha$-norm. These estimations depend on the sets of concentration $\Omega$ and $\Sigma$, and on  the band-limited function $\varphi$, only the last estimation is
independent on $\varphi$.

 \section{Preliminaires}
  The Weinstein operator $\Delta_{W,\alpha}^d$ defined on $\mathbb{R}_{+}^{d+1}=\mathbb{R}^d\times(0, \infty)$, by
\begin{equation*}
\Delta_{W,\alpha}^d=\sum_{j=1}^{d+1}\frac{\partial^2}{\partial x_j^2}+\frac{2\alpha+1}{x_{d+1}}\frac{\partial}{\partial x_{d+1}}=\Delta_d+L_\alpha,\;\alpha>-1/2,
\end{equation*}
where $\Delta_d$ is the Laplacian operator for the $d$ first variables and $L_\alpha$ is the Bessel operator for the last variable defined on $(0,\infty)$ by
$$L_\alpha u=\frac{\partial^2 u}{\partial x_{d+1}^2}+\frac{2\alpha+1}{x_{d+1}}\frac{\partial u}{\partial x_{d+1}}.$$

The Weinstein operator $\Delta_{W,\alpha}^d$ has several applications in pure and applied mathematics, especially in fluid mechanics \cite{brelot1978equation, weinstein1962singular}.

For all $\lambda=(\lambda_1,...,\lambda_{d+1})\in\mathbb{C}^{d+1}$, the system
\begin{equation}
\begin{gathered}
\frac{\partial^2u}{\partial x_{j}^2}(  x)
  =-\lambda_{j} ^2u(x), \quad\text{if } 1\leq j\leq d \\
L_{\alpha}u(  x)  =-\lambda_{d+1}^2u(  x), \\
u(  0)  =1, \quad \frac{\partial u}{\partial
x_{d+1}}(0)=0,\quad \frac{\partial u}{\partial
x_{j}}(0)=-i\lambda_{j}, \quad \text{if } 1\leq j\leq d
\end{gathered}
\end{equation}
 has a unique solution  denoted by $\Lambda_{\alpha}^d(\lambda,.),$ and given by
\begin{equation}\label{wkernel}
\Lambda_{\alpha}^d(\lambda,x)=e^{-i<x^\prime,\lambda^\prime>}j_\alpha(x_{d+1}\lambda_{d+1})
\end{equation}
 where $x=(x^\prime,x_{d+1}),\; \lambda=(\lambda^\prime,\lambda_{d+1})$ and $j_\alpha$ is is the normalized Bessel function of index $\alpha$ defined by
$$j_\alpha(x)=\Gamma(\alpha+1)\sum_{k=0}^\infty\frac{(-1)^k x^{2k}}{2^k k!\Gamma(\alpha+k+1)}.$$
The function $(\lambda,x)\mapsto\Lambda_{\alpha}^d(\lambda,x)$ is called the Weinstein kernel and has a unique extension to $\mathbb{C}^{d+1}\times\mathbb{C}^{d+1}$, and satisfied the following properties.\\
\begin{itemize}
\item[(i)] For all $(\lambda,x)\in \mathbb{C}^{d+1}\times\mathbb{C}^{d+1}$ we have
\begin{equation*}
\Lambda_{\alpha}^d(\lambda,x)=\Lambda_{\alpha}^d(x,\lambda).
\end{equation*}
\item[(ii)] For all $(\lambda,x)\in \mathbb{C}^{d+1}\times\mathbb{C}^{d+1}$ we have
\begin{equation*}
\Lambda_{\alpha}^d(\lambda,-x)=\Lambda_{\alpha}^d(-\lambda,x).
\end{equation*}
\item[(iii)] For all $(\lambda,x)\in \mathbb{C}^{d+1}\times\mathbb{C}^{d+1}$ we get
\begin{equation*}
\Lambda_{\alpha}^d(\lambda,0)=1.
\end{equation*}
\item[(iv)] For all $\nu\in\mathbb{N}^{d+1},\;x\in\mathbb{R}^{d+1}$ and $\lambda\in\mathbb{C}^{d+1}$ we have
\begin{equation*}\label{klk}
 \left|D_\lambda^\nu\Lambda_{\alpha}^d(\lambda,x)\right|\leq\left\|x\right\|^{\left|\nu\right|}e^{\left\|x\right\|\left\|\Im \lambda\right\|}
\end{equation*}
\end{itemize}
where $D_\lambda^\nu=\partial^\nu/(\partial\lambda_1^{\nu_1}...\partial\lambda_{d+1}^{\nu_{d+1}})$ and $\left|\nu\right|=\nu_1+...+\nu_{d+1}.$ In particular, for all $(\lambda,x)\in \mathbb{R}^{d+1}\times\mathbb{R}^{d+1}$, we have
\begin{equation}\label{normLambda}
\left|\Lambda_{\alpha}^d(\lambda,x)\right|\leq 1.
\end{equation}

In the following we denote by
\begin{itemize}
\item[(i)] $C_*(\mathbb{R}^{d+1})$, the space of continuous functions on $\mathbb{R}^{d+1},$ even with respect to the last variable.
\item[(ii)] $S_*(\mathbb{R}^{d+1})$, the space of the $C^\infty$ functions, even with respect to the last variable, and rapidly decreasing together with their derivatives.
	\item[(iii)] $L^p_\alpha(\mathbb{R}^{d+1}_+),\;1\leq p\leq \infty,$ the space of measurable functions $f$ on $\mathbb{R}^{d+1}_+$ such that
	$$\left\|f\right\|_{\alpha,p}=\left(\int_{\mathbb{R}^{d+1}_+}\left|f(x)\right|^pd\mu_\alpha(x)\right)^{1/p}<\infty, \;p\in[1,\infty),$$
	$$\left\|f\right\|_{\alpha,\infty}=\textrm{ess}\sup_{x\in\mathbb{R}^{d+1}_+}\left|f(x)\right|<\infty,$$

where $d\mu_{\alpha}(x)$ is the measure on  $\mathbb{R}_{+}^{d+1}=\mathbb{R}^d\times(0,\infty)$ given by
\begin{equation*}\label{mesure}
	d\mu_\alpha(x)=\frac{x^{2\alpha+1}_{d+1}}{(2\pi)^d2^{2\alpha}\Gamma^2(\alpha+1)}dx.
	\end{equation*}
\end{itemize}

For a radial function $\varphi\in L_{\alpha}^{1}(\mathbb{R}_{+} ^{d+1})$ the function $\tilde{\varphi}$ defined on $\mathbb{R}_+$ such that $\varphi(x)=\tilde{\varphi}(|x|)$, for all
$x\in\mathbb{R}_{+} ^{d+1}$, is integrable with respect to the measure $r^{2\alpha+d+1}dr$, and we have
\begin{equation}\label{radialweinstein}
  \int_{\mathbb{R}_{+}^{d+1}}\varphi(x)d\mu_{\alpha}(x)=a_\alpha\int_{0}^{\infty}
  \tilde{\varphi}(r)r^{2\alpha+d+1}dr,
\end{equation}
where $$a_\alpha=\frac{1}{2^{\alpha+\frac{d}{2}}\Gamma(\alpha+\frac{d}{2}+1)}.$$
The Weinstein transform generalizing the usual Fourier transform, is given for
$\varphi\in L_{\alpha}^{1}(\mathbb{R}_{+} ^{d+1})$ and $\lambda\in\mathbb{R}_{+}^{d+1}$, by
$$
\mathcal{F}_{W,\alpha}
(\varphi)(\lambda)=\int_{\mathbb{R}_{+}^{d+1}}\varphi(x)\Lambda_{\alpha}^d(x, \lambda
)d\mu_{\alpha}(x),
$$

We list some known basic properties of the Weinstein transform are as follows. For the proofs, we refer \cite{nahia1996mean, nahia1996spherical}.

\begin{itemize}
	\item[(i)] For all $\varphi\in L^1_\alpha(\mathbb{R}^{d+1}_+)$, the function $\mathcal{F}_{W,\alpha}(\varphi)$  is continuous on $\mathbb{R}^{d+1}_+$ and we have
	\begin{equation}\label{L1-Linfty}
	\left\|\mathcal{F}_{W,\alpha}\varphi\right\|_{\alpha,\infty}\leq\left\|\varphi\right\|_{\alpha,1}.
	\end{equation}
	\item[(ii)]   The Weinstein transform is a topological isomorphism from $\mathcal{S}_*(\mathbb{R}^{d+1}_+)$ onto itself. The inverse transform is given by
	\begin{equation}\label{inversionweinstein}
	\mathcal{F}_{W,\alpha}^{-1}\varphi(\lambda)= \mathcal{F}_{W,\alpha}\varphi(-\lambda),\;\textrm{for\;all}\;\lambda\in\mathbb{R}^{d+1}_+.
	\end{equation}
	
	\item[(iii)] Parseval formula: For all $\varphi, \phi\in \mathcal{S}_*(\mathbb{R}^{d+1}_+)$, we have
	\begin{equation*}\label{MM} \int_{\mathbb{R}^{d+1}_+}\varphi(x)\overline{\phi(x)}d\mu_\alpha(x)=\int_{\mathbb{R}^{d+1}_+}\mathcal{F}_{W,\alpha}
(\varphi)(x)\overline{\mathcal{F}_{W,\alpha}(\phi)(x)}d\mu_\alpha(x).
	\end{equation*}
\item[(v)] Plancherel formula: For all $\varphi\in L^2_\alpha(\mathbb{R}^{d+1}_+)$, we have
\begin{equation}\label{Plancherel formula}
	\left\|\mathcal{F}_{W,\alpha}\varphi\right\|_{\alpha,2}=\left\|\varphi\right\|_{\alpha,2}.
	\end{equation}
\item[(vi)] Plancherel Theorem: The Weinstein transform $\mathcal{F}_{W,\alpha}$ extends uniquely to an isometric isomorphism on $L^2_\alpha(\mathbb{R}^{d+1}_+).$
\item[(vii)] Inversion formula: Let $\varphi\in L^1_\alpha(\mathbb{R}^{d+1}_+)$ such that $\mathcal{F}_{W,\alpha}\varphi\in L^1_\alpha(\mathbb{R}^{d+1}_+)$,  then we have
\begin{equation}\label{inv}
\varphi(\lambda)=\int_{\mathbb{R}^{d+1}_+}\mathcal{F}_{W,\alpha}\varphi(x)\Lambda_{\alpha}^d(-\lambda,x)d\mu_\alpha(x),\;\textrm{a.e. }\lambda\in\mathbb{R}^{d+1}_+.
\end{equation}
\end{itemize}

Using relations (\ref{L1-Linfty}) and (\ref{Plancherel formula}) with Marcinkiewicz's interpolation theorem \cite{zbMATH03367521} we deduce that for every $\varphi\in L^p_\alpha(\mathbb{R}^{d+1}_+)$ for all $1\leq p\leq 2$, the function $\mathcal{F}_{W,\alpha}(\varphi)\in L^q_\alpha(\mathbb{R}^{d+1}_+), q=p/(p-1),$ and
\begin{equation}\label{Lp-Lq}
	\left\|\mathcal{F}_{W,\alpha}\varphi\right\|_{\alpha,q}\leq\left\|\varphi\right\|_{\alpha,p}.
	\end{equation}

\section{$L^p$-Heisenberg-Pauli-Weyl inequality}
In this section, we extend the Heisenberg-Pauli-Weyl uncertainty principle (\ref{firstuncert})
to more general case for the Weinstein operator. We need to use the method of Ciatti et al. \cite{ciatti2007heisenberg}, which is the
counterpart in the Euclidean case. In the following we need this lemma.
\begin{lem}
  Let $1<p\leq 2$, $q=p/(p-1)$ and $0<s<(2\alpha+d+2)/q$. Then for all $\varphi\in L^p_\alpha(\mathbb{R}^{d+1}_+)$ and $z>0$,

  \begin{equation}\label{lemexpo}
    \left\|e^{-z|y|^2}\mathcal{F}_{W,\alpha}(\varphi)\right\|_{\alpha,q}\leq \left(1+\frac{K_\alpha}{(2q)^{(\alpha+\frac{1}{d}+1)}\frac{1}{q}}\right)z^{-s/2} \left\||x|^s\varphi\right\|_{\alpha, p},
  \end{equation}
  where
  $$K_\alpha=\left((2\alpha+d+2-qs)2^{\alpha+\frac{d}{2}}\Gamma(\alpha+\frac{d}{2}+1)\right)^{-1/q}$$
\end{lem}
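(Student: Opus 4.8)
The plan is to split $\varphi$ at a radius $R>0$ that will be optimised at the end,
\[
\varphi=\varphi_1+\varphi_2,\qquad \varphi_1=\varphi\,\mathbf 1_{\{|x|\le R\}},\quad \varphi_2=\varphi\,\mathbf 1_{\{|x|>R\}},
\]
and to control the Weinstein transform of the ``local'' part $\varphi_1$ by the $L^1_\alpha\to L^\infty_\alpha$ bound (\ref{L1-Linfty}), and that of the ``tail'' part $\varphi_2$ by the Hausdorff--Young bound (\ref{Lp-Lq}). We may assume $\||x|^s\varphi\|_{\alpha,p}<\infty$, otherwise there is nothing to prove.

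For the tail, since $|x|^{-s}\le R^{-s}$ on $\{|x|>R\}$ we get at once $\|\varphi_2\|_{\alpha,p}\le R^{-s}\||x|^s\varphi\|_{\alpha,p}$, so $\varphi_2\in L^p_\alpha$, and using $e^{-z|y|^2}\le1$ together with (\ref{Lp-Lq}),
\[
\big\|e^{-z|y|^2}\mathcal{F}_{W,\alpha}(\varphi_2)\big\|_{\alpha,q}\le\|\varphi_2\|_{\alpha,p}\le R^{-s}\big\||x|^s\varphi\big\|_{\alpha,p}.
\]
For the local part, H\"older's inequality with exponents $q$ and $p$ followed by the radial integration formula (\ref{radialweinstein}) give
\[
\|\varphi_1\|_{\alpha,1}\le\Big(\int_{|x|\le R}|x|^{-sq}\,d\mu_\alpha(x)\Big)^{1/q}\big\||x|^s\varphi\big\|_{\alpha,p}=\Big(a_\alpha\int_0^R r^{2\alpha+d+1-sq}\,dr\Big)^{1/q}\big\||x|^s\varphi\big\|_{\alpha,p},
\]
and this integral is finite exactly because $2\alpha+d+1-sq>-1$, i.e. $s<(2\alpha+d+2)/q$; evaluating it and inserting the value of $a_\alpha$ yields $\|\varphi_1\|_{\alpha,1}\le K_\alpha\,R^{(2\alpha+d+2)/q-s}\||x|^s\varphi\|_{\alpha,p}$. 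In particular $\varphi_1\in L^1_\alpha$, hence $\mathcal{F}_{W,\alpha}(\varphi)=\mathcal{F}_{W,\alpha}(\varphi_1)+\mathcal{F}_{W,\alpha}(\varphi_2)$ and, by (\ref{L1-Linfty}),
\[
\big\|e^{-z|y|^2}\mathcal{F}_{W,\alpha}(\varphi_1)\big\|_{\alpha,q}\le\big\|\mathcal{F}_{W,\alpha}(\varphi_1)\big\|_{\alpha,\infty}\,\big\|e^{-z|y|^2}\big\|_{\alpha,q}\le\|\varphi_1\|_{\alpha,1}\,\big\|e^{-z|y|^2}\big\|_{\alpha,q}.
\]
A direct computation with (\ref{radialweinstein}) and the substitution $u=qz\,r^2$ gives $\big\|e^{-z|y|^2}\big\|_{\alpha,q}=(2qz)^{-(\alpha+d/2+1)/q}$.

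Adding the two bounds,
\[
\big\|e^{-z|y|^2}\mathcal{F}_{W,\alpha}(\varphi)\big\|_{\alpha,q}\le\Big(K_\alpha\,R^{(2\alpha+d+2)/q-s}\,(2qz)^{-(\alpha+d/2+1)/q}+R^{-s}\Big)\big\||x|^s\varphi\big\|_{\alpha,p},
\]
and it remains to optimise in $R$. Taking $R=\sqrt z$ and using $\alpha+d/2+1=(2\alpha+d+2)/2$ one checks that the power of $z$ in the first summand collapses to $z^{-s/2}$, matching the second summand, and collecting the constants produces exactly
\[
\big\|e^{-z|y|^2}\mathcal{F}_{W,\alpha}(\varphi)\big\|_{\alpha,q}\le\Big(1+K_\alpha\,(2q)^{-(\alpha+d/2+1)/q}\Big)z^{-s/2}\big\||x|^s\varphi\big\|_{\alpha,p}.
\]

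The argument is entirely elementary; the only points requiring care are the bookkeeping of the constants $a_\alpha$, $K_\alpha$ and the factor $\Gamma(\alpha+d/2+1)$ coming out of the two radial integrals, and checking that the choice $R=\sqrt z$ makes the two $z$-exponents coincide. The hypothesis $0<s<(2\alpha+d+2)/q$ is used once and only once, to ensure convergence of $\int_0^R r^{2\alpha+d+1-sq}\,dr$ near the origin.
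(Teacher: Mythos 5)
Your proof is correct and follows essentially the same route as the paper: the same split of $\varphi$ at radius $R=\rho$, the same use of the $L^1_\alpha\to L^\infty_\alpha$ bound together with H\"older on the ball and the Hausdorff--Young inequality on the complement, the same radial evaluations of $\left\|e^{-z|y|^2}\right\|_{\alpha,q}$ and $\left\||x|^{-s}\chi_{B_\rho}\right\|_{\alpha,q}$, and the same choice $R=\sqrt{z}$. Your bookkeeping of the constant, giving $1+K_\alpha(2q)^{-(\alpha+\frac{d}{2}+1)/q}$, is in fact the correctly typeset version of the constant the paper intends.
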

\begin{proof}
   Let  $\varphi \in  L^p_{\alpha}(\mathbb{R}^{d+1}_+)$. The inequality (\ref{lemexpo}) holds if
   $\left\||x|^s\varphi\right\|_{\alpha, p}=\infty.$
    Let us now assume that $\left\||x|^s\varphi\right\|_{\alpha, p}<\infty.$ For $\rho>0$, we put $B_\rho=\{x\in\mathbb{R}^{d+1}_+: |x|<\rho\}$ and $B_\rho^c=\mathbb{R}^{d+1}_+\backslash B_\rho.$ Denote by $\chi_{B_\rho}$ and $\chi_{B_\rho^c}$ the characteristic functions.  Let $\varphi\in L^p_\alpha(\mathbb{R}^{d+1}_+)$, $1<p\leq 2$ and $q=p/(p-1)$. Like that
    $$|\varphi\chi_{B_\rho^c}(x)|\leq \rho^{-s}|x|^s|\varphi(x)|,$$
    then, by inequality (\ref{Lp-Lq}), we get
    \begin{align*}
       \left\|e^{-z|y|^2}\mathcal{F}_{W,\alpha}(\varphi\chi_{B_\rho^c})\right\|_{\alpha,q}\leq  & \left\|e^{-z|y|^2}\right\|_{\alpha,\infty}\left\|\mathcal{F}_{W,\alpha}(\varphi\chi_{B_\rho^c})
       \right\|_{\alpha,q} \\
      \leq & \left\|e^{-z|y|^2}\right\|_{\alpha,\infty}\left\|\varphi\chi_{B_\rho^c}
       \right\|_{\alpha,p} \\
     \leq  & \left\|e^{-z|y|^2}\right\|_{\alpha,\infty}\rho^{-s}\left\||x|^s|\varphi
       \right\|_{\alpha,p}.
    \end{align*}
    On the other hand, according to (\ref{L1-Linfty}) and H\"oolder's inequality, we obtain
    \begin{align*}
       \left\|e^{-z|y|^2}\mathcal{F}_{W,\alpha}(\varphi\chi_{B_\rho})\right\|_{\alpha,q}\leq  & \left\|e^{-z|y|^2}\right\|_{\alpha,q}\left\|\mathcal{F}_{W,\alpha}(\varphi\chi_{B_\rho})
       \right\|_{\alpha,\infty} \\
       \leq &  \left\|e^{-z|y|^2}\right\|_{\alpha,q}\left\|\varphi\chi_{B_\rho)}\right\|_{\alpha,1}\\
        \leq &  \left\|e^{-z|y|^2}\right\|_{\alpha,q}\left\||x|^{-s}\chi_{B_\rho)}\right\|_{\alpha,q} \left\||x|^s\varphi\right\|_{\alpha,p}.
    \end{align*}
  According to integral relationship (\ref{radialweinstein}), we have the following identity
$$ \left\|e^{-z|y|^2}\right\|_{\alpha,q}=\frac{1}{(2q)^{(\alpha+\frac{d}{2}+1)}}z^{-(\alpha+\frac{d}{2}+1)}
\quad\text{and}\quad \left\||x|^{-s}\chi_{B_\rho}\right\|_{\alpha,q}=K_\alpha\rho^{-s+(2\alpha+d+2)/q}.$$
Hence, we get
 $$\left\|e^{-z|y|^2}\mathcal{F}_{W,\alpha}(\varphi\chi_{B_\rho})\right\|_{\alpha,q}\leq \frac{C_\alpha}{(2q)^{(\alpha+\frac{d}{2}+1)}}z^{-(\alpha+\frac{d}{2}+1)}\rho^{-s+(2\alpha+d+2)/q}  \left\||x|^s\varphi\right\|_{\alpha,p},$$
 and
\begin{align*}
  \left\|e^{-z|y|^2}\mathcal{F}_{W,\alpha}(\varphi)\right\|_{\alpha,q}  \leq & \left\|e^{-z|y|^2}\mathcal{F}_{W,\alpha}(\varphi\chi_{B_\rho})\right\|_{\alpha,q} +  \left\|e^{-z|y|^2}\mathcal{F}_{W,\alpha}(\varphi\chi_{B_\rho^c})\right\|_{\alpha,q} \\
    \leq & \rho^{-s}\left(1+\frac{K_\alpha}{(2q)^{(\alpha+\frac{d}{2}+1)\frac{1}{q}}}\rho^{(2\alpha+d+2)/q}
    z^{-(\alpha+\frac{d}{2}+1)\frac{1}{q}}\right) \left\||x|^s\varphi\right\|_{\alpha,p}.
\end{align*}
By choosing $\rho=z^{1/2}$ we get the result.
    \end{proof}
\begin{thm}
 Let $1<p\leq 2$, $q=p/(p-1)$,  $0<s<(2\alpha+d+2)/q$ and $t>0$, then for all $\varphi\in L^p_\alpha(\mathbb{R}^{d+1}_+)$,
 \begin{equation}\label{LpHPWU}
   \left\|\mathcal{F}_{W,\alpha}(\varphi)\right\|_{\alpha,q}  \leq K(s,t) \left\||x|^s\varphi\right\|_{\alpha,p}^\frac{t}{s+t}\left\||y|^t\mathcal{F}_{W,\alpha}(\varphi)\right\|_
   {\alpha,q}^\frac{s}{s+t},
 \end{equation}
 where $K(s,t)$ is a positive constant.
 \end{thm}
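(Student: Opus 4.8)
The plan is to adapt the Gaussian–truncation scheme of Ciatti et al.\ referred to above: one introduces the one–parameter family of Gaussian multipliers $e^{-z|y|^2}$, $z>0$, splits $\mathcal{F}_{W,\alpha}(\varphi)$ into the corresponding ``low–frequency'' and ``high–frequency'' parts, estimates each separately, and finally optimises in $z$. First I would dispose of the degenerate cases. By \eqref{Lp-Lq} we have $\mathcal{F}_{W,\alpha}(\varphi)\in L^q_\alpha(\mathbb{R}^{d+1}_+)$ with $\left\|\mathcal{F}_{W,\alpha}(\varphi)\right\|_{\alpha,q}\le\left\|\varphi\right\|_{\alpha,p}<\infty$, so the left–hand side of \eqref{LpHPWU} is finite. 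If $\left\||x|^s\varphi\right\|_{\alpha,p}=\infty$ or $\left\||y|^t\mathcal{F}_{W,\alpha}(\varphi)\right\|_{\alpha,q}=\infty$ there is nothing to prove; and if one of these quantities vanishes, then $\varphi$ (resp.\ $\mathcal{F}_{W,\alpha}(\varphi)$) is $0$ $\mu_\alpha$–a.e.\ because $|x|^s,|y|^t>0$ a.e., so the left–hand side vanishes as well. Hence we may assume that both quantities lie in $(0,\infty)$.

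For fixed $z>0$, I would write $\mathcal{F}_{W,\alpha}(\varphi)(y)=e^{-z|y|^2}\mathcal{F}_{W,\alpha}(\varphi)(y)+\bigl(1-e^{-z|y|^2}\bigr)\mathcal{F}_{W,\alpha}(\varphi)(y)$ and apply the triangle inequality in $L^q_\alpha(\mathbb{R}^{d+1}_+)$. The first summand is controlled directly by the preceding lemma, which gives $\bigl\|e^{-z|y|^2}\mathcal{F}_{W,\alpha}(\varphi)\bigr\|_{\alpha,q}\le C\,z^{-s/2}\left\||x|^s\varphi\right\|_{\alpha,p}$, where $C$ is the constant appearing in \eqref{lemexpo}. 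For the second summand I would use the elementary pointwise inequality $1-e^{-u}\le u^{\theta}$, valid for $u\ge0$ and $0<\theta\le1$; taking $\theta=t/2$ (permissible when $0<t\le2$) gives $1-e^{-z|y|^2}\le z^{t/2}|y|^{t}$ pointwise, hence $\bigl\|(1-e^{-z|y|^2})\mathcal{F}_{W,\alpha}(\varphi)\bigr\|_{\alpha,q}\le z^{t/2}\left\||y|^t\mathcal{F}_{W,\alpha}(\varphi)\right\|_{\alpha,q}$. For $t>2$ the same bound, with an extra constant $c_t$ depending only on $t$, should be recovered either by a dyadic decomposition of the frequency variable or by writing $1-e^{-z|y|^2}=\int_0^z|y|^2e^{-r|y|^2}\,dr$ and combining Minkowski's integral inequality with a reapplication of the lemma; I expect this case to be the one genuinely delicate point of the proof.

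Combining the two estimates, one obtains, for every $z>0$,
\[
\left\|\mathcal{F}_{W,\alpha}(\varphi)\right\|_{\alpha,q}\le C\,z^{-s/2}A+c_t\,z^{t/2}B,\qquad A:=\left\||x|^s\varphi\right\|_{\alpha,p},\quad B:=\left\||y|^t\mathcal{F}_{W,\alpha}(\varphi)\right\|_{\alpha,q},
\]
with $c_t=1$ when $0<t\le2$. The right–hand side is continuous on $(0,\infty)$ and tends to $+\infty$ both as $z\to0^+$ and as $z\to+\infty$, so it attains its minimum at the unique stationary point $z_\ast=\bigl(sCA/(tc_tB)\bigr)^{2/(s+t)}$. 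Inserting $z_\ast$ and collecting the powers of $A$ and $B$ yields
\[
\left\|\mathcal{F}_{W,\alpha}(\varphi)\right\|_{\alpha,q}\le K(s,t)\,A^{\frac{t}{s+t}}B^{\frac{s}{s+t}},\qquad K(s,t)=\Bigl[\bigl(\tfrac{t}{s}\bigr)^{\frac{s}{s+t}}+\bigl(\tfrac{s}{t}\bigr)^{\frac{t}{s+t}}\Bigr]\,C^{\frac{t}{s+t}}\,c_t^{\frac{s}{s+t}},
\]
which is precisely \eqref{LpHPWU}. Everything apart from the $t>2$ estimate flagged above is routine: the degenerate–case check, the verification that the stationary point is a minimum, and the algebra in the final substitution; the hypothesis $0<s<(2\alpha+d+2)/q$ is needed only so that the preceding lemma is available.
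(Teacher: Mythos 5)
Your treatment of the range $0<t\le 2$ is exactly the paper's argument: split $\mathcal{F}_{W,\alpha}(\varphi)$ with the Gaussian multiplier $e^{-z|y|^2}$, control the first piece by the lemma, control the second piece by the pointwise bound $1-e^{-u}\le c\,u^{t/2}$ (valid precisely because $t/2\le 1$), and then optimise in $z$; your handling of the degenerate cases and of the minimisation is fine. The genuine gap is the case $t>2$, which you flag but do not solve, and the repair you sketch cannot work. The inequality you hope to recover,
\begin{equation*}
\bigl\|(1-e^{-z|y|^2})\mathcal{F}_{W,\alpha}(\varphi)\bigr\|_{\alpha,q}\le c_t\,z^{t/2}\bigl\||y|^t\mathcal{F}_{W,\alpha}(\varphi)\bigr\|_{\alpha,q},
\end{equation*}
is false for $t>2$: near $y=0$ one has $1-e^{-z|y|^2}\sim z|y|^2$, and $z|y|^2$ is not dominated by $c_t z^{t/2}|y|^t$ as $|y|\to 0$ when $t>2$. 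Taking $\mathcal{F}_{W,\alpha}(\varphi)$ essentially supported on a small ball $\{|y|<\delta\}$ makes the ratio of the two sides blow up like $\delta^{2-t}$, so this is not just a failure of the pointwise bound but of the norm inequality itself. Neither of your proposed fixes escapes this: the identity $1-e^{-z|y|^2}=\int_0^z|y|^2e^{-r|y|^2}\,dr$ plus Minkowski still leaves you needing $|y|^{2-t}e^{-r|y|^2}$ bounded, which fails at low frequencies, and a dyadic decomposition faces the same obstruction on the blocks with $|y|\lesssim z^{-1/2}$.

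The paper's route around this is different and is the missing idea you need: having proved the theorem for $t\le 2$, apply it with $t=1$, and then upgrade to arbitrary $t>1$ by the moment interpolation inequality on the frequency side. Concretely, from $|y|\le\varepsilon+\varepsilon^{1-t}|y|^t$ (valid for $t\ge 1$) one gets, after optimising in $\varepsilon$,
\begin{equation*}
\bigl\||y|\mathcal{F}_{W,\alpha}(\varphi)\bigr\|_{\alpha,q}\le \tfrac{t}{t-1}(t-1)^{1/t}\,\bigl\|\mathcal{F}_{W,\alpha}(\varphi)\bigr\|_{\alpha,q}^{\frac{t-1}{t}}\bigl\||y|^t\mathcal{F}_{W,\alpha}(\varphi)\bigr\|_{\alpha,q}^{\frac{1}{t}},
\end{equation*}
which is substituted into the $t=1$ case of the theorem; the resulting factor $\|\mathcal{F}_{W,\alpha}(\varphi)\|_{\alpha,q}^{s(t-1)/(t(s+1))}$ is then absorbed into the left-hand side (legitimate since $\|\mathcal{F}_{W,\alpha}(\varphi)\|_{\alpha,q}\le\|\varphi\|_{\alpha,p}<\infty$ by Hausdorff--Young, and the statement is trivial if it vanishes), and raising to the power $t(s+1)/(s+t)$ gives exactly the exponents $\tfrac{t}{s+t}$ and $\tfrac{s}{s+t}$. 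Without this (or some equivalent) reduction, your proof covers only $0<t\le 2$.
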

\begin{proof}
  Let $\varphi\in L^p_\alpha(\mathbb{R}^{d+1}_+)$ and $1<p\leq 2$, such that
  $$\left\||x|^s\varphi\right\|_{\alpha,p}+\left\||y|^t\mathcal{F}_{W,\alpha}(\varphi)\right\|_
   {\alpha,q}<\infty.$$
   Suppose that  $0<s<(2\alpha+d+2)/q$ and $t<2$. Then, by Lemma \ref{lemexpo} we have, for all $z>0$,
   \begin{align*}
     \left\|\mathcal{F}_{W,\alpha}(\varphi)\right\|_{\alpha,q}  \leq & \left\|e^{-z|y|^2}\mathcal{F}_{W,\alpha}(\varphi)\right\|_{\alpha,q}   + \left\|(1-e^{-z|y|^2})\mathcal{F}_{W,\alpha}(\varphi)\right\|_{\alpha,q} \\
     \leq & \left(1+\frac{K_\alpha}{(2q)^{(\alpha+\frac{1}{d}+1)}\frac{1}{q}}\right)z^{-s/2} \left\||x|^s\varphi\right\|_{\alpha, p} + \left\|(1-e^{-z|y|^2})\mathcal{F}_{W,\alpha}(\varphi)\right\|_{\alpha,q}.
   \end{align*}
   on the other hand, we have
   $$\left\|(1-e^{-z|y|^2})\mathcal{F}_{W,\alpha}(\varphi)\right\|_{\alpha,q}=
   z^{t/2}\left\|(s|y|^2)^{-s/2}(1-e^{-z|y|^2})|y|^2\mathcal{F}_{W,\alpha}(\varphi)\right\|_{\alpha,q}.$$
   Take into account$(1-e^{-u})u^{-s/2}$ is bounded for all $u\geq 0$, if $t\leq 2$. Consequently,
   $$ \left\|\mathcal{F}_{W,\alpha}(\varphi)\right\|_{\alpha,q}  \leq K\left(z^{s/2}\left\||x|^s\varphi\right\|_{\alpha, p} +z^{t/2}\left\||y|^s\mathcal{F}_{W,\alpha}(\varphi)\right\|_{\alpha,q}\right).$$
   By taking $$z=\left(\frac{s\left\||x|^s\varphi\right\|_{\alpha, p}}{t\left\||y|^s\mathcal{F}_{W,\alpha}(\varphi)\right\|_{\alpha,q}}\right)^{\frac{2}{s+t}},$$
   we get the result for all $t\leq 2.$

   It remains to show the result for $t>0$. Since, we have for all $\varepsilon>0$,
   $|y|\leq \varepsilon+\varepsilon^{1-t}|y|^t,$
   then it follows that
   \begin{equation}\label{ineq1}
   \left\||y|\mathcal{F}_{W,\alpha}(\varphi)\right\|_{\alpha,q}\leq \varepsilon\left\|\mathcal{F}_{W,\alpha}(\varphi)\right\|_{\alpha,q} + \varepsilon^{1-t} \left\||y|^t\mathcal{F}_{W,\alpha}(\varphi)\right\|_{\alpha,q}.
   \end{equation}
By choosing,
          $$ \varepsilon=(t-1)^\frac{1}{t}\left(\frac{ \left\||y|^t\mathcal{F}_{W,\alpha}(\varphi)\right\|_{\alpha,q}}{\left\|\mathcal{F}_{W,\alpha}(\varphi)
          \right\|_{\alpha,q}}\right)^\frac{1}{t},$$
we get
 \begin{equation}\label{inq2}
 \left\||y|\mathcal{F}_{W,\alpha}(\varphi)\right\|_{\alpha,q}\leq \frac{t}{t-1}(t-1)^\frac{1}{t} \left\|\mathcal{F}_{W,\alpha}(\varphi)\right\|_{\alpha,q}^\frac{t-1}{t} \left\||y|^t\mathcal{F}_{W,\alpha}(\varphi)\right\|_{\alpha,q}^\frac{1}{t}.
 \end{equation}
 Combining the inequalities (\ref{ineq1}) and (\ref{inq2}), we obtain
 \begin{align*}
    \left\|\mathcal{F}_{W,\alpha}(\varphi)\right\|_{\alpha,q} & \leq  K\left\||x|^s\varphi\right\|_{\alpha,p}^\frac{1}{1+s}\left\||y|\mathcal{F}_{W,\alpha}(\varphi)\right\|_
   {\alpha,q}^\frac{s}{1+s} \\
    & \leq   K  \left\|\mathcal{F}_{W,\alpha}(\varphi)\right\|_{\alpha,q}^\frac{s(t-1)}{t(s+1)} \left\||x|^s\varphi\right\|_{\alpha,p}^\frac{1}{1+s}\left\||y|^t\mathcal{F}_{W,\alpha}(\varphi)\right\|_
   {\alpha,q}^\frac{s}{t(s+1)}.
 \end{align*}

 which implies
\begin{equation*}
   \left\|\mathcal{F}_{W,\alpha}(\varphi)\right\|_{\alpha,q}^\frac{s+t}{t(s+1)}
    \leq K  \left\||x|^s\varphi)\right\|_{\alpha,p}^\frac{1}{1+s} \left\||y|^s\mathcal{F}_{W,\alpha}(\varphi)\right\|_
   {\alpha,q}^\frac{s}{t(s+1)}.
\end{equation*}
 which gives the rsult for $t>0.$
\end{proof}
\begin{rem}
 Let $q=2$. According to Placherel formula (\ref{Plancherel formula}), we get
 \begin{equation*}
   \left\|\mathcal{F}_{W,\alpha}(\varphi)\right\|_{\alpha,2}  \leq K(s,t) \left\||x|^s\varphi\right\|_{\alpha,2}^\frac{t}{s+t}\left\||y|^t\mathcal{F}_{W,\alpha}(\varphi)\right\|_
   {\alpha,2}^\frac{s}{s+t},
 \end{equation*}
 which is the general case of the Heisenberg-Pauli-Weyl inequality (\ref{firstuncert}) proved by Mejjaoli \cite{mejjaoli2011uncertainty}.
\end{rem}

\section{$L^p$-Donoho-Strak uncertainty principles}
\begin{defn}
  Let $\Omega$ and $\Sigma$ be a measurable subsets of $\mathbb{R}^{d+1}_+$. We define the timelimiting
operator $P_\Omega$ by $P_\Omega\varphi:=\varphi\chi_\Omega$ and define the Weinstein integral operator $Q_\Sigma$ by $\mathcal{F}_{W,\alpha}(Q_\Sigma\varphi)=\mathcal{F}_{W,\alpha}(\varphi)\chi_\Sigma.$
\end{defn}
\begin{prop}\label{repQ}
  Let $\varphi\in L^p_\alpha(\mathbb{R}^{d+1}_+)$, $1\leq p\leq 2.$ If $\mu_\alpha(\Sigma)<\infty$, then we have
  \begin{equation*}
    Q_\Sigma\varphi(x)=\int_{\Sigma}\Lambda_{\alpha}^d(x, \lambda)\mathcal{F}_{W,\alpha}(\varphi)(\lambda)d\mu_{\alpha}(\lambda).
  \end{equation*}
\end{prop}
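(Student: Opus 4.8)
The plan is to prove the integral representation of $Q_\Sigma\varphi$ by first treating the case $p=1$ directly and then passing to general $1\le p\le 2$ by a density/continuity argument. For $\varphi\in L^1_\alpha(\mathbb{R}^{d+1}_+)$ we know from (\ref{L1-Linfty}) that $\mathcal{F}_{W,\alpha}(\varphi)$ is continuous and bounded, with $\|\mathcal{F}_{W,\alpha}(\varphi)\|_{\alpha,\infty}\le\|\varphi\|_{\alpha,1}$. By definition, $\mathcal{F}_{W,\alpha}(Q_\Sigma\varphi)=\mathcal{F}_{W,\alpha}(\varphi)\chi_\Sigma$. Since $\mu_\alpha(\Sigma)<\infty$ and $\mathcal{F}_{W,\alpha}(\varphi)$ is bounded, the product $\mathcal{F}_{W,\alpha}(\varphi)\chi_\Sigma$ lies in $L^1_\alpha(\mathbb{R}^{d+1}_+)$ (and, being bounded, also in every $L^r_\alpha$). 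Hence both $Q_\Sigma\varphi$ and its Weinstein transform are in $L^1_\alpha$, so the inversion formula (\ref{inv}) applies to $Q_\Sigma\varphi$, giving
\begin{equation*}
  Q_\Sigma\varphi(x)=\int_{\mathbb{R}^{d+1}_+}\mathcal{F}_{W,\alpha}(Q_\Sigma\varphi)(\lambda)\Lambda_{\alpha}^d(-x,\lambda)\,d\mu_\alpha(\lambda)
  =\int_{\Sigma}\Lambda_{\alpha}^d(-x,\lambda)\mathcal{F}_{W,\alpha}(\varphi)(\lambda)\,d\mu_\alpha(\lambda),
\end{equation*}
and the symmetry properties (i)--(ii) of the Weinstein kernel (together with evenness in the last variable) let us rewrite $\Lambda_{\alpha}^d(-x,\lambda)$ as $\Lambda_{\alpha}^d(x,\lambda)$ after the relevant reflection, yielding the claimed formula for $p=1$.

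For general $1<p\le 2$, the strategy is to approximate $\varphi$ by functions $\varphi_n\in (L^1_\alpha\cap L^p_\alpha)(\mathbb{R}^{d+1}_+)$ — for instance $\varphi_n=\varphi\chi_{B_n}$ — so that $\varphi_n\to\varphi$ in $L^p_\alpha$. By (\ref{Lp-Lq}), $\mathcal{F}_{W,\alpha}(\varphi_n)\to\mathcal{F}_{W,\alpha}(\varphi)$ in $L^q_\alpha$, hence $\mathcal{F}_{W,\alpha}(\varphi_n)\chi_\Sigma\to\mathcal{F}_{W,\alpha}(\varphi)\chi_\Sigma$ in $L^q_\alpha$, and since $\mu_\alpha(\Sigma)<\infty$ this convergence also holds in $L^1_\alpha$ on $\Sigma$ by Hölder. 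On the left, $Q_\Sigma\varphi_n\to Q_\Sigma\varphi$ in $L^p_\alpha$ because $Q_\Sigma$ is bounded on $L^p_\alpha$ (it is the composition of $\mathcal{F}_{W,\alpha}$, multiplication by $\chi_\Sigma$, and the inverse transform, or one argues directly via $\|Q_\Sigma\varphi\|_{\alpha,p}\le \|\mathcal F_{W,\alpha}(\varphi)\chi_\Sigma\|_{\alpha,q}$ after noting the relevant bounds). On the right, for each fixed $x$ the integrand is dominated using $|\Lambda_{\alpha}^d(x,\lambda)|\le 1$ from (\ref{normLambda}), so the integral over $\Sigma$ depends continuously (in fact, is bounded by $\|\mathcal F_{W,\alpha}(\varphi_n-\varphi)\|_{\alpha,q}\,\mu_\alpha(\Sigma)^{1/p}$) on $\mathcal{F}_{W,\alpha}(\varphi_n)$; thus the right-hand integrals converge, uniformly in $x$, to $\int_\Sigma \Lambda_{\alpha}^d(x,\lambda)\mathcal{F}_{W,\alpha}(\varphi)(\lambda)\,d\mu_\alpha(\lambda)$. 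Matching limits (up to passing to a subsequence converging a.e. on the left) gives the identity a.e.

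The main obstacle I anticipate is the bookkeeping around which sense of convergence holds where, and ensuring the right-hand side is well-defined for general $\varphi\in L^p_\alpha$: one must check that $\lambda\mapsto\Lambda_{\alpha}^d(x,\lambda)\mathcal{F}_{W,\alpha}(\varphi)(\lambda)$ is integrable over $\Sigma$, which follows from $|\Lambda_{\alpha}^d(x,\lambda)|\le 1$, Hölder's inequality with exponents $q$ and $p$, the finiteness $\mu_\alpha(\Sigma)<\infty$, and $\mathcal{F}_{W,\alpha}(\varphi)\in L^q_\alpha$ by (\ref{Lp-Lq}). A secondary technical point is justifying that the inversion formula (\ref{inv}) genuinely applies in the $p=1$ step — this is clean because $Q_\Sigma\varphi$ and $\mathcal{F}_{W,\alpha}(Q_\Sigma\varphi)$ are both in $L^1_\alpha$ as noted above — and then handling the kernel reflection $\Lambda_{\alpha}^d(-x,\lambda)\rightsquigarrow\Lambda_{\alpha}^d(x,\lambda)$ carefully using properties (i), (ii) and the evenness of functions in the last variable, so that the final formula matches the statement exactly.
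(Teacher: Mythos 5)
Your overall idea is the right one (the claimed formula is just the inverse Weinstein transform of $\mathcal{F}_{W,\alpha}(\varphi)\chi_\Sigma$, which Hölder puts in $L^1_\alpha$), but your reduction of the case $1<p\le 2$ to the case $p=1$ by density contains a step that fails as written. To pass to the limit on the left-hand side you invoke boundedness of $Q_\Sigma$ on $L^p_\alpha$, justified ``directly via $\|Q_\Sigma\varphi\|_{\alpha,p}\le \|\mathcal{F}_{W,\alpha}(\varphi)\chi_\Sigma\|_{\alpha,q}$''. No such inequality is available: the Hausdorff--Young estimate (\ref{Lp-Lq}) (and its analogue for the inverse transform via (\ref{inversionweinstein})) maps $L^p_\alpha$ into $L^q_\alpha$ with $q\ge 2$, never $L^q_\alpha$ back into $L^p_\alpha$ with $p\le 2$, and an indicator multiplier $\varphi\mapsto\mathcal{F}^{-1}_{W,\alpha}(\chi_\Sigma\mathcal{F}_{W,\alpha}\varphi)$ is in general not a bounded operator on $L^p$ for $p\neq 2$ (already in the classical Euclidean setting). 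So the convergence $Q_\Sigma\varphi_n\to Q_\Sigma\varphi$ is unjustified. It is repairable: since $\mu_\alpha(\Sigma)<\infty$, Hölder gives $\mathcal{F}_{W,\alpha}(\varphi_n)\chi_\Sigma\to\mathcal{F}_{W,\alpha}(\varphi)\chi_\Sigma$ in $L^2_\alpha$, hence $Q_\Sigma\varphi_n\to Q_\Sigma\varphi$ in $L^2_\alpha$ by Plancherel (\ref{Plancherel formula}), and a.e.\ along a subsequence, which is all your limit-matching needs. A secondary point (a looseness the paper shares): in your $p=1$ step the assertion that $Q_\Sigma\varphi$ itself lies in $L^1_\alpha$ does not follow from $\mathcal{F}_{W,\alpha}(Q_\Sigma\varphi)\in L^1_\alpha$ and is generally false; the correct route is that $\mathcal{F}_{W,\alpha}(\varphi)\chi_\Sigma\in L^1_\alpha\cap L^2_\alpha$, so $Q_\Sigma\varphi=\mathcal{F}^{-1}_{W,\alpha}\big(\mathcal{F}_{W,\alpha}(\varphi)\chi_\Sigma\big)$, and the inverse transform of an $L^1_\alpha$ function is given by the explicit integral (using (\ref{inversionweinstein}) and the kernel symmetries), which is the claimed representation.

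Beyond that gap, the two-step structure ($p=1$ plus density) is unnecessary, and your final paragraph already contains the paper's one-step proof: by (\ref{normLambda}), (\ref{Lp-Lq}) and Hölder with $\mu_\alpha(\Sigma)<\infty$, one has $\mathcal{F}_{W,\alpha}(\varphi)\chi_\Sigma\in L^1_\alpha\cap L^2_\alpha(\mathbb{R}^{d+1}_+)$ for every $1\le p\le 2$, so $Q_\Sigma\varphi=\mathcal{F}^{-1}_{W,\alpha}(\mathcal{F}_{W,\alpha}(\varphi)\chi_\Sigma)$ is well defined and the $L^1_\alpha$ inversion integral yields the formula directly, with no case distinction and no approximation argument. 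Rewriting your proof that way both closes the gap and matches the paper's argument.
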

\begin{proof}
   Let $\varphi\in L^p_\alpha(\mathbb{R}^{d+1}_+)$, $1\leq p\leq 2$ and put $q=p/(p-1)$. According to inequalities (\ref{normLambda}) and (\ref{Lp-Lq}) we obtain
   \begin{align*}
     \left\|\mathcal{F}_{W,\alpha}(\varphi)\chi_\Sigma\right\|_{\alpha,1} & = \int_{\Sigma}|\mathcal{F}_{W,\alpha}(\varphi)(x)|d\mu_{\alpha}(x) \\
      & \leq \left(\mu_{\alpha}(\Sigma)\right)^{1/p} \left\|\mathcal{F}_{W,\alpha}(\varphi)\right\|_{\alpha,q}\\
       & \leq \left(\mu_{\alpha}(\Sigma)\right)^{1/p} \left\|\varphi\right\|_{\alpha,q}.
   \end{align*}
   In the same way, we get
   \begin{align*}
     \left\|\mathcal{F}_{W,\alpha}(\varphi)\chi_\Sigma\right\|_{\alpha,2} & =
     \left(\int_{\Sigma}|\mathcal{F}_{W,\alpha}(\varphi)(x)|^2d\mu_{\alpha}(x)\right)^\frac{1}{2} \\
      & \leq \left(\mu_{\alpha}(\Sigma)\right)^\frac{q-2}{2q} \left\|\mathcal{F}_{W,\alpha}(\varphi)\right\|_{\alpha,q}\\
       & \leq \left(\mu_{\alpha}(\Sigma)\right)^\frac{q-2}{2q} \left\|\varphi\right\|_{\alpha,q}.
   \end{align*}
Hence, $\mathcal{F}_{W,\alpha}(\varphi)\chi_\Sigma\in L^1_\alpha\cap L^2_\alpha(\mathbb{R}^{d+1}_+) $ and by the  definition of  the Weinstein integral operator $Q_\Sigma$  we obtain
$$Q_\Sigma\varphi=\mathcal{F}_{W,\alpha}^{-1}(\mathcal{F}_{W,\alpha}(\varphi)\chi_\Sigma).$$
Finally,  the inversion formula (\ref{inv}) gives the result.
\end{proof}
It is easy to see the following result by inequality (\ref{Lp-Lq}).
\begin{lem}
    Let $\varphi\in L^p_\alpha(\mathbb{R}^{d+1}_+)$, $1< p\leq 2$ and let $q=p/(p-1)$. Then we have
     $$\left\|\mathcal{F}_{W,\alpha}(Q_\Sigma\varphi)\right\|_{\alpha,q} \leq
      \left\|\varphi\right\|_{\alpha,p}.$$
\end{lem}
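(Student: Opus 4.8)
The plan is to reduce everything to the Hausdorff--Young-type estimate \eqref{Lp-Lq} together with the defining property of $Q_\Sigma$. First I would recall that, by the very definition of the Weinstein integral operator in Definition~4.1, one has the identity $\mathcal{F}_{W,\alpha}(Q_\Sigma\varphi)=\mathcal{F}_{W,\alpha}(\varphi)\chi_\Sigma$, which holds pointwise $\mu_\alpha$-almost everywhere on $\mathbb{R}^{d+1}_+$. Note that for $1<p\leq 2$ the function $\mathcal{F}_{W,\alpha}(\varphi)$ is genuinely an element of $L^q_\alpha(\mathbb{R}^{d+1}_+)$ by \eqref{Lp-Lq}, so this product is a well-defined function in $L^q_\alpha(\mathbb{R}^{d+1}_+)$ and there is no integrability issue to worry about.

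Next I would observe that, since $\chi_\Sigma$ takes values in $\{0,1\}$, we have the pointwise bound
\[
\left|\mathcal{F}_{W,\alpha}(Q_\Sigma\varphi)(x)\right|=\left|\mathcal{F}_{W,\alpha}(\varphi)(x)\right|\chi_\Sigma(x)\leq \left|\mathcal{F}_{W,\alpha}(\varphi)(x)\right|
\]
for $\mu_\alpha$-almost every $x\in\mathbb{R}^{d+1}_+$. Raising to the power $q$ and integrating against $d\mu_\alpha$ gives $\left\|\mathcal{F}_{W,\alpha}(Q_\Sigma\varphi)\right\|_{\alpha,q}\leq \left\|\mathcal{F}_{W,\alpha}(\varphi)\right\|_{\alpha,q}$, i.e.\ restriction to $\Sigma$ can only decrease the $L^q_\alpha$-norm.

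Finally, I would invoke \eqref{Lp-Lq}, which yields $\left\|\mathcal{F}_{W,\alpha}(\varphi)\right\|_{\alpha,q}\leq\left\|\varphi\right\|_{\alpha,p}$; chaining the two inequalities produces the claimed bound $\left\|\mathcal{F}_{W,\alpha}(Q_\Sigma\varphi)\right\|_{\alpha,q}\leq\left\|\varphi\right\|_{\alpha,p}$. There is essentially no obstacle here: the statement is a one-line consequence of the definition of $Q_\Sigma$ and the already-established $L^p_\alpha$--$L^q_\alpha$ boundedness of $\mathcal{F}_{W,\alpha}$; the only point requiring a word of care is that $\mathcal{F}_{W,\alpha}(\varphi)$ be interpreted in the $L^q_\alpha$ sense of \eqref{Lp-Lq} (rather than as the a priori integral, which need not converge for $p>1$), and that is automatic in the stated range $1<p\leq 2$.
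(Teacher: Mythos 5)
Your argument is correct and is exactly the route the paper intends: the paper states the lemma with the remark that it follows easily from inequality \eqref{Lp-Lq}, and your chain $\left\|\mathcal{F}_{W,\alpha}(Q_\Sigma\varphi)\right\|_{\alpha,q}=\left\|\mathcal{F}_{W,\alpha}(\varphi)\chi_\Sigma\right\|_{\alpha,q}\leq\left\|\mathcal{F}_{W,\alpha}(\varphi)\right\|_{\alpha,q}\leq\left\|\varphi\right\|_{\alpha,p}$ is precisely that one-line deduction, with the helpful extra remark about interpreting $\mathcal{F}_{W,\alpha}(\varphi)$ in the $L^q_\alpha$ sense.
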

\begin{thm}\label{FPQ}
   Let $\Omega$ and $\Sigma$ be a measurable subsets of $\mathbb{R}^{d+1}_+$.   Let $\varphi\in L^p_\alpha(\mathbb{R}^{d+1}_+)$, $1< p\leq 2$ and $q=p/(p-1)$. Then we have
   \begin{equation*}
     \left\|\mathcal{F}_{W,\alpha}(Q_\Sigma P_\Omega\varphi\right\|_{\alpha,q} \leq \left(\mu_{\alpha}(\Sigma)\right)^\frac{1}{q} \left(\mu_{\alpha}(\Omega)\right)^\frac{1}{q}
      \left\|\varphi\right\|_{\alpha,p}.
   \end{equation*}
\end{thm}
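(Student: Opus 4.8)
The plan is to reduce the estimate to the $L^1$--$L^\infty$ mapping property \eqref{L1-Linfty} together with two applications of H\"older's inequality, exactly in the spirit of the preceding Proposition and Lemma. First I would dispose of the trivial cases: if $\mu_\alpha(\Omega)=\infty$ or $\mu_\alpha(\Sigma)=\infty$ the right-hand side is infinite unless $\varphi\equiv 0$, while if one of the two sets is $\mu_\alpha$-null the left-hand side vanishes; so from now on one may assume $\mu_\alpha(\Omega),\mu_\alpha(\Sigma)\in(0,\infty)$.

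Next I would unwind the definitions of $P_\Omega$ and $Q_\Sigma$. By construction $\mathcal{F}_{W,\alpha}(Q_\Sigma P_\Omega\varphi)=\mathcal{F}_{W,\alpha}(\varphi\chi_\Omega)\,\chi_\Sigma$, hence
\[
\left\|\mathcal{F}_{W,\alpha}(Q_\Sigma P_\Omega\varphi)\right\|_{\alpha,q}^{q}
=\int_{\Sigma}\left|\mathcal{F}_{W,\alpha}(\varphi\chi_\Omega)(\lambda)\right|^{q}\,d\mu_\alpha(\lambda).
\]
Since $\varphi\in L^p_\alpha(\mathbb{R}^{d+1}_+)$ and $\mu_\alpha(\Omega)<\infty$, H\"older's inequality with the conjugate exponents $p$ and $q$ shows that $\varphi\chi_\Omega\in L^1_\alpha(\mathbb{R}^{d+1}_+)$ with
\[
\left\|\varphi\chi_\Omega\right\|_{\alpha,1}\le\left(\mu_\alpha(\Omega)\right)^{1/q}\left\|\varphi\right\|_{\alpha,p},
\]
so that $\mathcal{F}_{W,\alpha}(\varphi\chi_\Omega)$ is a genuine continuous function and, by \eqref{L1-Linfty},
\[
\left\|\mathcal{F}_{W,\alpha}(\varphi\chi_\Omega)\right\|_{\alpha,\infty}\le\left(\mu_\alpha(\Omega)\right)^{1/q}\left\|\varphi\right\|_{\alpha,p}.
\]

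Finally I would feed this uniform bound into the integral over $\Sigma$:
\[
\int_{\Sigma}\left|\mathcal{F}_{W,\alpha}(\varphi\chi_\Omega)(\lambda)\right|^{q}\,d\mu_\alpha(\lambda)
\le\mu_\alpha(\Sigma)\left\|\mathcal{F}_{W,\alpha}(\varphi\chi_\Omega)\right\|_{\alpha,\infty}^{q}
\le\mu_\alpha(\Sigma)\,\mu_\alpha(\Omega)\left\|\varphi\right\|_{\alpha,p}^{q},
\]
and taking $q$-th roots yields the asserted inequality. I do not expect a genuine obstacle here; the only points requiring a little care are the reduction to the finite-measure case and the verification that $\varphi\chi_\Omega$ actually lies in $L^1_\alpha(\mathbb{R}^{d+1}_+)$, which is what makes \eqref{L1-Linfty} applicable and $\mathcal{F}_{W,\alpha}(\varphi\chi_\Omega)$ an honest bounded function rather than merely an element of $L^q_\alpha(\mathbb{R}^{d+1}_+)$.
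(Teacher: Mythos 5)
Your proposal is correct and follows essentially the same route as the paper: a uniform bound $\left\|\mathcal{F}_{W,\alpha}(P_\Omega\varphi)\right\|_{\alpha,\infty}\leq\left(\mu_{\alpha}(\Omega)\right)^{1/q}\left\|\varphi\right\|_{\alpha,p}$ (the paper gets it by H\"older applied to the integral representation of $\mathcal{F}_{W,\alpha}(P_\Omega\varphi)$ with $\left|\Lambda_{\alpha}^d\right|\leq 1$, you get the identical bound via (\ref{L1-Linfty}) plus H\"older on $\left\|\varphi\chi_\Omega\right\|_{\alpha,1}$), followed by integration of the $q$-th power over $\Sigma$. Your explicit treatment of the infinite- and null-measure cases is a minor tidying of what the paper leaves implicit.
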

\begin{proof}
  Assume that $ \mu_{\alpha}(\Sigma)<\infty$ and  $ \mu_{\alpha}(\Omega)<\infty$.  Let $\varphi\in L^p_\alpha(\mathbb{R}^{d+1}_+)$, $1\leq p\leq 2$ and $q=p/(p-1)$. From the definition of the Weinstein integral operator $Q_\Sigma$, we get
  $$\mathcal{F}_{W,\alpha}(Q_\Sigma P_\Omega\varphi)=\mathcal{F}_{W,\alpha}(P_\Omega\varphi)\chi_\Sigma.$$
  Therefore
  \begin{equation}\label{NPQ}
    \left\|\mathcal{F}_{W,\alpha}(Q_\Sigma P_\Omega\varphi)\right\|_{\alpha,q}  = \left(\int_{\Sigma}\left|\mathcal{F}_{W,\alpha}(P_\Omega\varphi)(\lambda)\right|^q d\mu_{\alpha}(\lambda)\right)^\frac{1}{q}.
  \end{equation}
  Since
  \begin{equation*}
    \mathcal{F}_{W,\alpha}(P_\Omega\varphi)(\lambda)=\int_{\Omega}\varphi(x)\Lambda_{\alpha}^d(x, \lambda
)d\mu_{\alpha}(x),
  \end{equation*}
 then according to Holder's inequality and (\ref{normLambda}), we obtain
  \begin{align*}
     \left|\mathcal{F}_{W,\alpha}(P_\Omega\varphi)\right| & \leq \left(\int_{\Omega}\left|\Lambda_{\alpha}^d(x, \lambda)\right|^q d\mu_{\alpha}(x)\right)^\frac{1}{q}  \left(\int_{\Omega}\left|\varphi(x)\right|^p d\mu_{\alpha}(x)\right)^\frac{1}{p}
 \\
     & \leq \left(\mu_{\alpha}(\Omega)\right)^\frac{1}{q}  \left\|\varphi\right\|_{\alpha,q}.
  \end{align*}
Hence, by (\ref{NPQ}), we get
 $$\left\|\mathcal{F}_{W,\alpha}(Q_\Sigma\varphi)\right\|_{\alpha,q} \leq
       \left(\mu_{\alpha}(\Sigma)\right)^\frac{1}{q} \left(\mu_{\alpha}(\Omega)\right)^\frac{1}{q}
      \left\|\varphi\right\|_{\alpha,p}.$$
      \end{proof}

\subsection{Concentration uncertainty principle}
In this section we present two continuous-time uncertainty
principles of concentration type and we show there are depend on the sets
of concentration $\Omega$ and $\Sigma$, and on the time function $\varphi$.
\begin{defn}
  Let $\Omega$, $\Sigma$ be a measurable subsets of $\mathbb{R}^{d+1}_+$ and  $\varphi\in L^p_\alpha(\mathbb{R}^{d+1}_+)$, $1\leq p\leq 2$.
  \begin{enumerate}
    \item[(i)] We say that $\varphi$ is $\varepsilon_\Omega$-concentrated to $\Omega$ in $ L^p_\alpha(\mathbb{R}^{d+1}_+)$-norm, if
         \begin{equation}\label{omegaconc}
         \left\|\varphi-P_\Omega\varphi\right\|_{\alpha,p}\leq  \varepsilon_\Omega\left\|\varphi\right\|_{\alpha,p}.
         \end{equation}
    \item[(ii)] $\mathcal{F}_{W,\alpha}(\varphi)$ is $\varepsilon_\Sigma$-concentrated to $\Sigma$ in $ L^q_\alpha(\mathbb{R}^{d+1}_+)$-norm, $q=p/(p-1)$,
         if
       \begin{equation}\label{Sigmaconc}
       \left\|\mathcal{F}_{W,\alpha}(\varphi)-\mathcal{F}_{W,\alpha}(Q_\Sigma\varphi)\right\|_{\alpha,q}\leq  \varepsilon_\Sigma\left\|\mathcal{F}_{W,\alpha}(\varphi)\right\|_{\alpha,q}.
    \end{equation}
  \end{enumerate}
\end{defn}
The following theorem, states the first continuous-time uncertainty
principle of concentration type for the $L^p_\alpha$-theory.
\begin{thm}\label{unc1}
 Let $\Omega$, $\Sigma$ be a measurable subsets of $\mathbb{R}^{d+1}_+$ and  $\varphi\in L^p_\alpha(\mathbb{R}^{d+1}_+)$, $1< p\leq 2$. If $\varphi$ is $\varepsilon_\Omega$-concentrated to $\Omega$ in $ L^p_\alpha(\mathbb{R}^{d+1}_+)$-norm and  $\mathcal{F}_{W,\alpha}(\varphi)$ is $\varepsilon_\Sigma$-concentrated to $\Sigma$ in $ L^q_\alpha(\mathbb{R}^{d+1}_+)$-norm, $q=p/(p-1)$, then we have
 \begin{equation*}
    \left\|\mathcal{F}_{W,\alpha}(\varphi)\right\|_{\alpha,q}\leq \frac{ \left(\mu_{\alpha}(\Sigma)\right)^\frac{1}{q} \left(\mu_{\alpha}(\Omega)\right)^\frac{1}{q}+\varepsilon_\Omega}{1-\varepsilon_\Sigma}\left\|\varphi
    \right\|_{\alpha,p}.
 \end{equation*}
\end{thm}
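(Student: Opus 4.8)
The plan is to run the classical Donoho--Stark triangle-inequality argument, using Theorem \ref{FPQ} as the quantitative input and the contraction estimate (\ref{Lp-Lq}) together with the preceding lemma to absorb the error terms. First I would dispose of the trivial cases: if $\mu_\alpha(\Omega)=\infty$ or $\mu_\alpha(\Sigma)=\infty$ the right-hand side is infinite and there is nothing to prove, so I may assume both are finite; I also work under the standing assumption $\varepsilon_\Sigma<1$ (otherwise the stated bound is vacuous). Note that $\mathcal{F}_{W,\alpha}(\varphi)\in L^q_\alpha(\mathbb{R}^{d+1}_+)$ with $\|\mathcal{F}_{W,\alpha}(\varphi)\|_{\alpha,q}<\infty$ by (\ref{Lp-Lq}), which makes the subtraction step at the end legitimate.

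The core of the argument is to insert $\mathcal{F}_{W,\alpha}(Q_\Sigma\varphi)$ and then $\mathcal{F}_{W,\alpha}(Q_\Sigma P_\Omega\varphi)$ via two triangle inequalities. Since $\mathcal{F}_{W,\alpha}(Q_\Sigma\psi)=\mathcal{F}_{W,\alpha}(\psi)\chi_\Sigma$ depends linearly on $\psi$, we have $\mathcal{F}_{W,\alpha}(Q_\Sigma\varphi)-\mathcal{F}_{W,\alpha}(Q_\Sigma P_\Omega\varphi)=\mathcal{F}_{W,\alpha}\big(Q_\Sigma(\varphi-P_\Omega\varphi)\big)$, so
$$\left\|\mathcal{F}_{W,\alpha}(\varphi)\right\|_{\alpha,q}\leq \left\|\mathcal{F}_{W,\alpha}(\varphi)-\mathcal{F}_{W,\alpha}(Q_\Sigma\varphi)\right\|_{\alpha,q}+\left\|\mathcal{F}_{W,\alpha}\big(Q_\Sigma(\varphi-P_\Omega\varphi)\big)\right\|_{\alpha,q}+\left\|\mathcal{F}_{W,\alpha}(Q_\Sigma P_\Omega\varphi)\right\|_{\alpha,q}.$$

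I would then bound the three summands in turn. The first is at most $\varepsilon_\Sigma\|\mathcal{F}_{W,\alpha}(\varphi)\|_{\alpha,q}$ by the $\varepsilon_\Sigma$-concentration hypothesis (\ref{Sigmaconc}). For the second, the lemma preceding Theorem \ref{FPQ} (or directly (\ref{Lp-Lq}) combined with $|\cdot\,\chi_\Sigma|\le|\cdot|$) gives $\|\mathcal{F}_{W,\alpha}(Q_\Sigma(\varphi-P_\Omega\varphi))\|_{\alpha,q}\le\|\varphi-P_\Omega\varphi\|_{\alpha,p}\le\varepsilon_\Omega\|\varphi\|_{\alpha,p}$ by the $\varepsilon_\Omega$-concentration hypothesis (\ref{omegaconc}). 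The third summand is exactly the quantity estimated in Theorem \ref{FPQ}, namely $\|\mathcal{F}_{W,\alpha}(Q_\Sigma P_\Omega\varphi)\|_{\alpha,q}\le(\mu_\alpha(\Sigma))^{1/q}(\mu_\alpha(\Omega))^{1/q}\|\varphi\|_{\alpha,p}$.

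Collecting these, $\|\mathcal{F}_{W,\alpha}(\varphi)\|_{\alpha,q}\le\varepsilon_\Sigma\|\mathcal{F}_{W,\alpha}(\varphi)\|_{\alpha,q}+\big((\mu_\alpha(\Sigma))^{1/q}(\mu_\alpha(\Omega))^{1/q}+\varepsilon_\Omega\big)\|\varphi\|_{\alpha,p}$; moving the first term to the left and dividing by $1-\varepsilon_\Sigma>0$ yields the claimed inequality. There is no genuine obstacle in this proof — all the analytic weight has already been carried by Theorem \ref{FPQ} and the boundedness estimate (\ref{Lp-Lq}); the only points that warrant a sentence of care are the finiteness of $\|\mathcal{F}_{W,\alpha}(\varphi)\|_{\alpha,q}$ (needed to rearrange) and the reduction to $\mu_\alpha(\Omega),\mu_\alpha(\Sigma)<\infty$ and $\varepsilon_\Sigma<1$.
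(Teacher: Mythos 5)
Your proof is correct and is essentially the paper's own argument: the same decomposition inserting $\mathcal{F}_{W,\alpha}(Q_\Sigma\varphi)$ and $\mathcal{F}_{W,\alpha}(Q_\Sigma P_\Omega\varphi)$, with the three pieces bounded by the $\varepsilon_\Sigma$-concentration hypothesis, the contraction estimate for $Q_\Sigma$ applied to $\varphi-P_\Omega\varphi$, and Theorem \ref{FPQ}, followed by rearrangement. Your added remarks on finiteness of $\left\|\mathcal{F}_{W,\alpha}(\varphi)\right\|_{\alpha,q}$ and the reduction to $\varepsilon_\Sigma<1$ are sensible points of care that the paper leaves implicit.
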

\begin{proof}
  Let $\varphi\in L^p_\alpha(\mathbb{R}^{d+1}_+)$, $1< p\leq 2$. According to (\ref{omegaconc}), (\ref{Sigmaconc}) and Theorem \ref{FPQ} it follows that
  \begin{align*}
     \left\|\mathcal{F}_{W,\alpha}(\varphi)-\mathcal{F}_{W,\alpha}(Q_\Sigma P_\Omega\varphi)\right\|_{\alpha,q}  \leq &  \left\|\mathcal{F}_{W,\alpha}(\varphi)-\mathcal{F}_{W,\alpha}(Q_\Sigma \varphi)\right\|_{\alpha,q}\\ & +  \left\|\mathcal{F}_{W,\alpha}(Q_\Sigma \varphi)-\mathcal{F}_{W,\alpha}(Q_\Sigma P_\Omega\varphi)\right\|_{\alpha,q}\\
      \leq & \varepsilon_\Sigma \left\|\mathcal{F}_{W,\alpha}(\varphi)\right\|_{\alpha,q}+  \left\|\varphi-P_\Omega\varphi\right\|_{\alpha,p}\\
      \leq &\varepsilon_\Sigma \left\|\mathcal{F}_{W,\alpha}(\varphi)\right\|_{\alpha,q}+ \varepsilon_\Omega\left\|\varphi\right\|_{\alpha,p}.
  \end{align*}
  Applying the triangle inequality and think to Theorem \ref{FPQ}, we show that
  \begin{align*}
    \left\|\mathcal{F}_{W,\alpha}(\varphi)\right\|_{\alpha,q} & \leq \left\|\mathcal{F}_{W,\alpha}(Q_\Sigma P_\Omega\varphi)\right\|_{\alpha,q}+\left\|\mathcal{F}_{W,\alpha}(\varphi)-\mathcal{F}_{W,\alpha}(Q_\Sigma P_\Omega\varphi)\right\|_{\alpha,q}\\
     & \leq \left( \left(\mu_{\alpha}(\Sigma)\right)^\frac{1}{q} \left(\mu_{\alpha}(\Omega)\right)^\frac{1}{q}+\varepsilon_\Omega\right)\left\|\varphi
    \right\|_{\alpha,p}+\varepsilon_\Sigma\left\|\mathcal{F}_{W,\alpha}(\varphi)\right\|_{\alpha,q}.
  \end{align*}
  which gives the desired result.
\end{proof}
The following result gives the second continuous-time uncertainty principle of
concentration type for the $L^1_\alpha\cap L^p_\alpha$ theory.
\begin{thm}\label{unc2}
   Let $\Omega$, $\Sigma$ be a measurable subsets of $\mathbb{R}^{d+1}_+$ and  $\varphi\in (L^1_\alpha\cap L^p_\alpha)(\mathbb{R}^{d+1}_+)$, $1< p\leq 2$. If $\varphi$ is $\varepsilon_\Omega$-concentrated to $\Omega$ in $ L^p_\alpha(\mathbb{R}^{d+1}_+)$-norm and  $\mathcal{F}_{W,\alpha}(\varphi)$ is $\varepsilon_\Sigma$-concentrated to $\Sigma$ in $ L^q_\alpha(\mathbb{R}^{d+1}_+)$-norm, $q=p/(p-1)$, then we have
   \begin{equation*}
    \left\|\mathcal{F}_{W,\alpha}(\varphi)\right\|_{\alpha,q}\leq \frac{ \left(\mu_{\alpha}(\Sigma)\right)^\frac{1}{q} \left(\mu_{\alpha}(\Omega)\right)^\frac{1}{q}}{(1-\varepsilon_\Omega)(1-\varepsilon_\Sigma)}\left\|\varphi
    \right\|_{\alpha,p}.
 \end{equation*}
\end{thm}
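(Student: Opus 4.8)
The plan is to follow the skeleton of the proof of Theorem~\ref{unc1}, but to exploit the extra hypothesis $\varphi\in L^1_\alpha(\mathbb{R}^{d+1}_+)$ so as to absorb the ``$\Omega$-error'' into a multiplicative factor $(1-\varepsilon_\Omega)^{-1}$ instead of carrying it additively. As usual we may assume $\mu_\alpha(\Omega)<\infty$, $\mu_\alpha(\Sigma)<\infty$, $\varepsilon_\Omega<1$, and $\varphi\neq 0$. First I would use the $\varepsilon_\Sigma$-concentration (\ref{Sigmaconc}) and the triangle inequality to get $(1-\varepsilon_\Sigma)\left\|\mathcal{F}_{W,\alpha}(\varphi)\right\|_{\alpha,q}\leq \left\|\mathcal{F}_{W,\alpha}(Q_\Sigma\varphi)\right\|_{\alpha,q}$. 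Since $\mathcal{F}_{W,\alpha}(Q_\Sigma\varphi)=\mathcal{F}_{W,\alpha}(\varphi)\chi_\Sigma$ vanishes outside $\Sigma$, Hölder's inequality on $\Sigma$ together with the bound (\ref{L1-Linfty}) — the first place $\varphi\in L^1_\alpha$ is used — yields
\[
\left\|\mathcal{F}_{W,\alpha}(Q_\Sigma\varphi)\right\|_{\alpha,q}=\left(\int_\Sigma\left|\mathcal{F}_{W,\alpha}(\varphi)(\lambda)\right|^q d\mu_\alpha(\lambda)\right)^{1/q}\leq \left(\mu_\alpha(\Sigma)\right)^{1/q}\left\|\mathcal{F}_{W,\alpha}(\varphi)\right\|_{\alpha,\infty}\leq \left(\mu_\alpha(\Sigma)\right)^{1/q}\left\|\varphi\right\|_{\alpha,1}.
\]

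Next I would control $\left\|\varphi\right\|_{\alpha,1}$ by $\left\|\varphi\right\|_{\alpha,p}$. Splitting $\varphi=P_\Omega\varphi+(\varphi-P_\Omega\varphi)$ and applying Hölder on the finite-measure set $\Omega$ gives $\left\|P_\Omega\varphi\right\|_{\alpha,1}=\int_\Omega|\varphi|\,d\mu_\alpha\leq \left(\mu_\alpha(\Omega)\right)^{1/q}\left\|P_\Omega\varphi\right\|_{\alpha,p}\leq \left(\mu_\alpha(\Omega)\right)^{1/q}\left\|\varphi\right\|_{\alpha,p}$, while the concentration of $\varphi$ to $\Omega$ — now read in the $L^1_\alpha$-norm, which is legitimate precisely because $\varphi\in L^1_\alpha$ — gives $\left\|\varphi-P_\Omega\varphi\right\|_{\alpha,1}\leq \varepsilon_\Omega\left\|\varphi\right\|_{\alpha,1}$. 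Adding these two estimates and rearranging yields $\left\|\varphi\right\|_{\alpha,1}\leq (1-\varepsilon_\Omega)^{-1}\left(\mu_\alpha(\Omega)\right)^{1/q}\left\|\varphi\right\|_{\alpha,p}$. Chaining the three estimates,
\[
\left\|\mathcal{F}_{W,\alpha}(\varphi)\right\|_{\alpha,q}\leq \frac{1}{1-\varepsilon_\Sigma}\left\|\mathcal{F}_{W,\alpha}(Q_\Sigma\varphi)\right\|_{\alpha,q}\leq \frac{\left(\mu_\alpha(\Sigma)\right)^{1/q}\left(\mu_\alpha(\Omega)\right)^{1/q}}{(1-\varepsilon_\Omega)(1-\varepsilon_\Sigma)}\left\|\varphi\right\|_{\alpha,p},
\]
which is the claim.

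The only genuinely new ingredient beyond the proof of Theorem~\ref{unc1} is the passage to the $L^1_\alpha$-setting, and this is where I expect the (mild) obstacle to lie: one needs (\ref{L1-Linfty}) to turn $\mathcal{F}_{W,\alpha}(\varphi)$ into an honest bounded function, and one needs the $\varepsilon_\Omega$-concentration to be available in the $L^1_\alpha$-norm in order to estimate the tail $\left\|\varphi-P_\Omega\varphi\right\|_{\alpha,1}$. Without the integrability hypothesis $\varphi\in L^1_\alpha$ this tail need not even be finite, and the product form $(1-\varepsilon_\Omega)(1-\varepsilon_\Sigma)$ degenerates to the additive form of Theorem~\ref{unc1}. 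Everything else is just the triangle inequality, Hölder's inequality on the finite-measure sets $\Omega$ and $\Sigma$, and the elementary bounds (\ref{L1-Linfty}) and (\ref{Lp-Lq}); in particular Theorem~\ref{FPQ} is not needed here.
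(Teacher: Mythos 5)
Your argument is essentially the paper's own proof: you first use the $\varepsilon_\Sigma$-concentration, H\"older's inequality on $\Sigma$ and the bound (\ref{L1-Linfty}) to get $\left\|\mathcal{F}_{W,\alpha}(\varphi)\right\|_{\alpha,q}\leq (1-\varepsilon_\Sigma)^{-1}\left(\mu_{\alpha}(\Sigma)\right)^{1/q}\left\|\varphi\right\|_{\alpha,1}$, and then control $\left\|\varphi\right\|_{\alpha,1}$ by $(1-\varepsilon_\Omega)^{-1}\left(\mu_{\alpha}(\Omega)\right)^{1/q}\left\|\varphi\right\|_{\alpha,p}$ via the $\Omega$-concentration read in the $L^1_\alpha$-norm plus H\"older on $\Omega$, exactly as in the paper (whose proof contains minor typos, e.g.\ $P_\Sigma$ for $P_\Omega$ and an exponent $1/p$ where $1/q$ is meant). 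The one caveat -- that the hypothesis only states $\varepsilon_\Omega$-concentration in $L^p_\alpha$, so using it in the $L^1_\alpha$-norm is an additional (tacit) assumption not literally implied by $\varphi\in L^1_\alpha$ -- is present in the published proof as well, so your proposal faithfully reproduces the paper's argument.
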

\begin{proof}
   Let $\Omega$, $\Sigma$ be a measurable subsets of $\mathbb{R}^{d+1}_+$ such that $\mu_{\alpha}(\Omega)<\infty$ and  $\mu_{\alpha}(\Sigma)<\infty$. Assume that $\varphi\in (L^1_\alpha\cap L^p_\alpha)(\mathbb{R}^{d+1}_+)$, $1< p\leq 2$. Like that  $\mathcal{F}_{W,\alpha}(\varphi)$ is $\varepsilon_\Sigma$-concentrated to $\Sigma$ in $ L^q_\alpha(\mathbb{R}^{d+1}_+)$-norm, $q=p/(p-1)$,then we get
   \begin{align*}
     \left\|\mathcal{F}_{W,\alpha}(\varphi)\right\|_{\alpha,q} &\leq  \varepsilon_\Sigma\left\|\mathcal{F}_{W,\alpha}(\varphi)\right\|_{\alpha,q}+ \left\|\mathcal{F}_{W,\alpha}(Q_\Sigma\varphi)\right\|_{\alpha,q}\\
      & \leq \varepsilon_\Sigma\left\|\mathcal{F}_{W,\alpha}(\varphi)\right\|_{\alpha,q}+ \left(\mu_{\alpha}(\Sigma)\right)^\frac{1}{q}\left\|\mathcal{F}_{W,\alpha}(\varphi)\right\|_{\alpha,\infty}.
   \end{align*}
According to inequality (\ref{L1-Linfty}), it follows
\begin{equation}\label{conc1}
    \left\|\mathcal{F}_{W,\alpha}(\varphi)\right\|_{\alpha,q}\leq \frac{ \left(\mu_{\alpha}(\Sigma)\right)^\frac{1}{q}} {1-\varepsilon_\Sigma}\left\|\varphi
    \right\|_{\alpha,1}.
\end{equation}
On the other hand, since $\varphi$ is $\varepsilon_\Omega$-concentrated to $\Omega$ in $ L^p_\alpha(\mathbb{R}^{d+1}_+)$-norm, then
 \begin{align*}
     \left\|\varphi\right\|_{\alpha,1} &\leq  \varepsilon_\Omega\left\|\varphi\right\|_{\alpha,1}+ \left\|P_\Sigma\varphi\right\|_{\alpha,1}\\
      & \leq \varepsilon_\Omega\left\|\varphi\right\|_{\alpha,q}+ \left(\mu_{\alpha}(\Omega)\right)^\frac{1}{p}\left\|\varphi\right\|_{\alpha,p}.
   \end{align*}
Hence
\begin{equation}\label{conc2}
 \left\|\varphi\right\|_{\alpha,1} \leq \frac{\left(\mu_{\alpha}(\Omega)\right)^\frac{1}{p}}{1-\varepsilon_\Omega}\left\|\varphi\right\|_{\alpha,q}.
\end{equation}
Combining (\ref{conc1}) and (\ref{conc2}) we obtain the result of this theorem.
\end{proof}
\begin{rem}
  We observe at first that the statement of Theorem \ref{unc1} depends on the time function $\varphi$. However although for $p=q=2$, the continuous-time uncertainty principle becomes
  \begin{equation*}
    1-\varepsilon_\Omega-\varepsilon_\Sigma\leq \mu_\alpha(\Omega)^\frac{1}{2}\mu_\alpha(\Sigma)^\frac{1}{2},
  \end{equation*}
  and is independent on the time function $\varphi$. Likewise we observe  that the statement of Theorem \ref{unc2} depends on the time function $\varphi$. Like the first  continuous-time uncertainty principle
  the statement of Theorem \ref{unc2} is independent on the time function $\varphi$ for $p=q=2$, and we have
   \begin{equation*}
    (1-\varepsilon_\Omega)(1-\varepsilon_\Sigma)\leq \mu_\alpha(\Omega)^\frac{1}{2}\mu_\alpha(\Sigma)^\frac{1}{2}.
  \end{equation*}
\end{rem}
\subsection{Continuous-bandlimited uncertainty principle}
In this section, we establish continuous-bandlimited uncertainty principle of
concentration. This principle depends  on the sets of concentration
$\Omega$ and $\Sigma$, but he is independent on the bandlimited function $\varphi$.
\begin{defn}
  Let $1\leq p\leq 2$ and $\psi\in L^p(\mathbb{R}_+^d)$.
  \begin{enumerate}
    \item[(i)] We say that $\psi$ is bandlimited to $\Sigma$ if
  $Q_\Sigma\psi=\psi$ and denote by $\mathcal{B}_\alpha^p(\Sigma)$ the set of functions $\psi\in L^p(\mathbb{R}_+^d)$ that are bandlimited to $\Sigma$.
    \item [(ii)] We say that $\varphi$ is $\varepsilon_\Sigma$-bandlimited to $\Sigma$ in $L^p_\alpha$-norm  if there exists $\psi\in \mathcal{B}_\alpha^p(\Sigma)$ such that
        \begin{equation*}
          \left\|\varphi-\psi\right\|_{\alpha,p}\leq \varepsilon  \left\|\varphi\right\|_{\alpha,p}.
        \end{equation*}
  \end{enumerate}
\end{defn}

 The space of bandlimited functions $\mathcal{B}_\alpha^p(\Sigma)$ satisfies the following property.
\begin{prop}\label{propband}
   Let $\Omega$, $\Sigma$ be a measurable subsets of $\mathbb{R}^{d+1}_+$. For all $\psi\in\mathcal{B}_\alpha^p(\Sigma)$,  $1\leq p\leq 2$, we have
   \begin{equation*}
     \left\|P_\Omega\psi\right\|_{\alpha,p}\leq  \left(\mu_{\alpha}(\Sigma)\right)^\frac{1}{p} \left(\mu_{\alpha}(\Omega)\right)^\frac{1}{p}  \left\|\psi\right\|_{\alpha,p}.
   \end{equation*}
\end{prop}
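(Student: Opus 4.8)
The plan is to derive from the bandlimiting hypothesis an $L^\infty_\alpha$-bound on $\psi$ in terms of $\|\psi\|_{\alpha,p}$ and $\mu_\alpha(\Sigma)$, and then to combine it with the trivial estimate $\|P_\Omega\psi\|_{\alpha,p}\le(\mu_\alpha(\Omega))^{1/p}\|\psi\|_{\alpha,\infty}$ coming from the definition of $P_\Omega$.

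First I would reduce to the case $\mu_\alpha(\Omega)<\infty$ and $\mu_\alpha(\Sigma)<\infty$, since otherwise the right-hand side is infinite (and the left-hand side vanishes when $\mu_\alpha(\Omega)=0$), so the inequality holds trivially. As $\psi\in\mathcal{B}_\alpha^p(\Sigma)$ means $Q_\Sigma\psi=\psi$, Proposition~\ref{repQ} yields the pointwise representation
$$\psi(x)=\int_\Sigma\Lambda_\alpha^d(x,\lambda)\,\mathcal{F}_{W,\alpha}(\psi)(\lambda)\,d\mu_\alpha(\lambda),\qquad x\in\mathbb{R}^{d+1}_+.$$

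Next I would apply H\"older's inequality on $\Sigma$ with exponents $p$ and $q=p/(p-1)$, using the bound $|\Lambda_\alpha^d(x,\lambda)|\le 1$ from (\ref{normLambda}), to get $|\psi(x)|\le(\mu_\alpha(\Sigma))^{1/p}\|\mathcal{F}_{W,\alpha}(\psi)\|_{\alpha,q}$ for every $x$ (in the endpoint $p=1$, $q=\infty$, one reads this as $|\psi(x)|\le\mu_\alpha(\Sigma)\|\mathcal{F}_{W,\alpha}(\psi)\|_{\alpha,\infty}$). Then (\ref{Lp-Lq}) gives $\|\mathcal{F}_{W,\alpha}(\psi)\|_{\alpha,q}\le\|\psi\|_{\alpha,p}$, whence
$$\|\psi\|_{\alpha,\infty}\le(\mu_\alpha(\Sigma))^{1/p}\,\|\psi\|_{\alpha,p}.$$
Finally, since $\|P_\Omega\psi\|_{\alpha,p}^p=\int_\Omega|\psi(x)|^p\,d\mu_\alpha(x)\le\mu_\alpha(\Omega)\,\|\psi\|_{\alpha,\infty}^p$, combining the two displays produces the asserted bound.

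Every step above is a direct invocation of a result from Section~2 (Proposition~\ref{repQ}, the kernel bound (\ref{normLambda}), and the inequality (\ref{Lp-Lq})) or of H\"older's inequality, so I do not expect a genuine obstacle; the only points needing a touch of care are the reduction to sets of finite measure and the interpretation of the H\"older step at the endpoint $p=1$.
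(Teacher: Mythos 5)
Your proposal is correct and follows essentially the same route as the paper: the representation of $\psi$ from Proposition \ref{repQ} via $Q_\Sigma\psi=\psi$, then H\"older with the kernel bound (\ref{normLambda}) and the norm inequality (\ref{Lp-Lq}) to obtain the uniform bound $(\mu_\alpha(\Sigma))^{1/p}\|\psi\|_{\alpha,p}$, and finally integration over $\Omega$. Your extra remarks on the infinite-measure reduction and the $p=1$ endpoint are just slightly more careful versions of what the paper does implicitly.
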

\begin{proof}
 The estimation is  trivial if the measure of $\Omega$ or $\Sigma$ is infinite.  Suppose that $ \mu_{\alpha}(\Sigma)<\infty$ and  $ \mu_{\alpha}(\Omega)<\infty$. Let $\psi\in \mathcal{B}_\alpha^p(\Sigma)$ such that  $1\leq p\leq 2$, then from Propostion \ref{repQ}, we have
 \begin{equation*}
    \psi(x)=\int_{\Sigma}\Lambda_{\alpha}^d(x, \lambda)\mathcal{F}_{W,\alpha}(\psi)(\lambda)d\mu_{\alpha}(\lambda).
  \end{equation*}
  From the property (\ref{normLambda}) and H\"older's inequality, we obtain for all $q=p/(p-1)$
 \begin{align*}
   \psi(x) & \leq  \left(\mu_{\alpha}(\Sigma)\right)^\frac{1}{p} \left\|\mathcal{F}_{W,\alpha}(\psi)\right\|_{\alpha,q}\\
     & \leq \left(\mu_{\alpha}(\Sigma)\right)^\frac{1}{p} \left\|\psi\right\|_{\alpha,p}.
 \end{align*}
    Applying now  the time-limiting operator $P_\Omega$ to the bandlimited function $\psi$ and observing the $L^p_\alpha$-norm, we obtain
    \begin{align*}
  \left\|P_\Omega\psi\right\|_{\alpha,p} & =\left( \int_{\Omega}|\psi(x)|^pd\mu_{\alpha}(x)\right)^\frac{1}{p}\\
     & \leq \left(\mu_{\alpha}(\Omega)\right)^\frac{1}{p}\left(\mu_{\alpha}(\Sigma)\right)^\frac{1}{p} \left\|\psi\right\|_{\alpha,p}.
 \end{align*}
 which yields the desired result.
\end{proof}
\begin{thm}\label{secc}
 Let $\Omega$, $\Sigma$ be a measurable subsets of $\mathbb{R}^{d+1}_+$ and let $\varphi\in L^p(\mathbb{R}_+^d)$ such that  $1\leq p\leq 2$. Then if $\varphi$ is $\varepsilon_\Sigma$-bandlimited to $\Sigma$ in $L^p_\alpha$-norm, we have
  \begin{equation*}
     \left\|P_\Omega\psi\right\|_{\alpha,p}\leq  \left((1+\varepsilon_\Sigma)\left(\mu_{\alpha}(\Sigma)\right)^\frac{1}{p} \left(\mu_{\alpha}(\Omega)\right)^\frac{1}{p} +\varepsilon_\Sigma \right) \left\|\psi\right\|_{\alpha,p}.
   \end{equation*}
\end{thm}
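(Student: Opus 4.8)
The plan is to reduce everything to a two-step triangle inequality together with the bound on $\|P_\Omega\psi\|_{\alpha,p}$ for genuinely bandlimited $\psi$ that was already established in Proposition \ref{propband}. Exactly as in that proposition, the estimate is trivial when $\mu_\alpha(\Omega)$ or $\mu_\alpha(\Sigma)$ is infinite, so I may assume both measures are finite.

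By the definition of $\varepsilon_\Sigma$-bandlimitedness there exists $\psi\in\mathcal{B}_\alpha^p(\Sigma)$ with $\left\|\varphi-\psi\right\|_{\alpha,p}\leq\varepsilon_\Sigma\left\|\varphi\right\|_{\alpha,p}$. First I would split
$$\left\|P_\Omega\varphi\right\|_{\alpha,p}\leq\left\|P_\Omega(\varphi-\psi)\right\|_{\alpha,p}+\left\|P_\Omega\psi\right\|_{\alpha,p}.$$
Since $P_\Omega$ is just multiplication by the characteristic function $\chi_\Omega$, the first term is at most $\left\|\varphi-\psi\right\|_{\alpha,p}\leq\varepsilon_\Sigma\left\|\varphi\right\|_{\alpha,p}$. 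For the second term I would apply Proposition \ref{propband} to the bandlimited function $\psi$, which gives $\left\|P_\Omega\psi\right\|_{\alpha,p}\leq\left(\mu_\alpha(\Sigma)\right)^{1/p}\left(\mu_\alpha(\Omega)\right)^{1/p}\left\|\psi\right\|_{\alpha,p}$, and then estimate $\left\|\psi\right\|_{\alpha,p}\leq\left\|\psi-\varphi\right\|_{\alpha,p}+\left\|\varphi\right\|_{\alpha,p}\leq(1+\varepsilon_\Sigma)\left\|\varphi\right\|_{\alpha,p}$.

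Combining the two bounds yields
$$\left\|P_\Omega\varphi\right\|_{\alpha,p}\leq\left((1+\varepsilon_\Sigma)\left(\mu_\alpha(\Sigma)\right)^{1/p}\left(\mu_\alpha(\Omega)\right)^{1/p}+\varepsilon_\Sigma\right)\left\|\varphi\right\|_{\alpha,p},$$
which is the claimed inequality. There is no genuine obstacle: the whole argument is a triangle inequality carried out twice. The only point to be careful about is that Proposition \ref{propband} applies to the exactly bandlimited surrogate $\psi$, not to $\varphi$ itself, which is precisely why the estimate acquires the extra additive term $\varepsilon_\Sigma$ and the multiplicative factor $(1+\varepsilon_\Sigma)$ compared with Proposition \ref{propband}. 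This theorem is in turn the key ingredient for the continuous-bandlimited uncertainty principle (Theorem D): if in addition $\varphi$ is $\varepsilon_\Omega$-concentrated to $\Omega$, then $(1-\varepsilon_\Omega)\left\|\varphi\right\|_{\alpha,p}\leq\left\|P_\Omega\varphi\right\|_{\alpha,p}$, and dividing through by $\left\|\varphi\right\|_{\alpha,p}$ gives $1-\varepsilon_\Omega-\varepsilon_\Sigma\leq(1+\varepsilon_\Sigma)\left(\mu_\alpha(\Sigma)\right)^{1/p}\left(\mu_\alpha(\Omega)\right)^{1/p}$.
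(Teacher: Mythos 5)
Your proof is correct and takes essentially the same route as the paper: pick the exactly bandlimited surrogate $\psi$, use the triangle inequality $\left\|P_\Omega\varphi\right\|_{\alpha,p}\leq\left\|P_\Omega\psi\right\|_{\alpha,p}+\left\|P_\Omega(\varphi-\psi)\right\|_{\alpha,p}$, bound the second term by $\varepsilon_\Sigma\left\|\varphi\right\|_{\alpha,p}$, and apply Proposition \ref{propband} together with $\left\|\psi\right\|_{\alpha,p}\leq(1+\varepsilon_\Sigma)\left\|\varphi\right\|_{\alpha,p}$. You also correctly read the statement as bounding $\left\|P_\Omega\varphi\right\|_{\alpha,p}$ by the stated constant times $\left\|\varphi\right\|_{\alpha,p}$ (the printed $\psi$'s are a typo), which is exactly what the paper's own proof establishes.
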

\begin{proof}
  Let $\varphi\in L^p(\mathbb{R}_+^d)$ such that  $1\leq p\leq 2$. Like that $\varphi$ is $\varepsilon_\Sigma$-bandlimited to $\Sigma$ in $L^p_\alpha$-norm, then there exists $\psi\in \mathcal{B}_\alpha^p(\Sigma)$ such that
        \begin{equation*}
          \left\|\varphi-\psi\right\|_{\alpha,p}\leq \varepsilon  \left\|\varphi\right\|_{\alpha,p}
        \end{equation*}
        and we have
        \begin{equation}\label{Pfi}
         \left\|P_\Omega\varphi\right\|_{\alpha,p}  \leq   \left\|P_\Omega\psi\right\|_{\alpha,p}+ \left\|P_\Omega(\varphi-\psi)\right\|_{\alpha,p}
            \leq  \left\|P_\Omega\psi\right\|_{\alpha,p}+\varepsilon_\Sigma\left\|\varphi\right\|_{\alpha,p}.
        \end{equation}
        Observe now the result of Proposition \ref{propband} and the fact that

        \begin{equation*}
          \left\|\psi\right\|_{\alpha,p}\leq (1+\varepsilon_\Sigma)\left\|\varphi\right\|_{\alpha,p}
        \end{equation*}
        we obtain the desired result.
\end{proof}
\begin{thm}
   Let $\Omega$, $\Sigma$ be a measurable subsets of $\mathbb{R}^{d+1}_+$ and let $\varphi\in L^p(\mathbb{R}_+^d)$ such that  $1\leq p\leq 2$. Then if $\varphi$ is  $\varepsilon_\Omega$-concentrated to $\Omega$ and $\varepsilon_\Sigma$-bandlimited to $\Sigma$ in $L^p_\alpha$-norm, we have
  \begin{equation*}
     1-\varepsilon_\Omega-\varepsilon_\Sigma\leq  (1+\varepsilon_\Sigma)\left(\mu_{\alpha}(\Sigma)\right)^\frac{1}{p} \left(\mu_{\alpha}(\Omega)\right)^\frac{1}{p}.
   \end{equation*}
\end{thm}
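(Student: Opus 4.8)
The plan is to combine the bandlimited estimate of Theorem \ref{secc} with the defining inequality of $\varepsilon_\Omega$-concentration, and then divide through by $\|\varphi\|_{\alpha,p}$. We may assume $\varphi\neq 0$, since otherwise the left-hand side is non-positive and there is nothing to prove; in particular $\|\varphi\|_{\alpha,p}>0$. Also, if $\mu_\alpha(\Omega)=\infty$ or $\mu_\alpha(\Sigma)=\infty$ the right-hand side is infinite and the inequality is trivial, so we assume both measures are finite.

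First I would unwind the concentration hypothesis. Since $\varphi$ is $\varepsilon_\Omega$-concentrated to $\Omega$ in $L^p_\alpha$-norm, the triangle inequality gives
\begin{equation*}
\left\|\varphi\right\|_{\alpha,p}\leq \left\|P_\Omega\varphi\right\|_{\alpha,p}+\left\|\varphi-P_\Omega\varphi\right\|_{\alpha,p}\leq \left\|P_\Omega\varphi\right\|_{\alpha,p}+\varepsilon_\Omega\left\|\varphi\right\|_{\alpha,p},
\end{equation*}
hence $(1-\varepsilon_\Omega)\left\|\varphi\right\|_{\alpha,p}\leq \left\|P_\Omega\varphi\right\|_{\alpha,p}$.

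Next I would invoke Theorem \ref{secc} (whose proof bounds precisely $\|P_\Omega\varphi\|_{\alpha,p}$, the $\psi$ there being the bandlimited approximant): since $\varphi$ is $\varepsilon_\Sigma$-bandlimited to $\Sigma$ in $L^p_\alpha$-norm,
\begin{equation*}
\left\|P_\Omega\varphi\right\|_{\alpha,p}\leq\left((1+\varepsilon_\Sigma)\left(\mu_{\alpha}(\Sigma)\right)^{\frac{1}{p}}\left(\mu_{\alpha}(\Omega)\right)^{\frac{1}{p}}+\varepsilon_\Sigma\right)\left\|\varphi\right\|_{\alpha,p}.
\end{equation*}
Chaining the two displays yields $(1-\varepsilon_\Omega)\left\|\varphi\right\|_{\alpha,p}\leq\left((1+\varepsilon_\Sigma)\left(\mu_{\alpha}(\Sigma)\right)^{\frac1p}\left(\mu_{\alpha}(\Omega)\right)^{\frac1p}+\varepsilon_\Sigma\right)\left\|\varphi\right\|_{\alpha,p}$. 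Dividing by $\left\|\varphi\right\|_{\alpha,p}>0$ and moving $\varepsilon_\Sigma$ to the left gives exactly
\begin{equation*}
1-\varepsilon_\Omega-\varepsilon_\Sigma\leq(1+\varepsilon_\Sigma)\left(\mu_{\alpha}(\Sigma)\right)^{\frac1p}\left(\mu_{\alpha}(\Omega)\right)^{\frac1p}.
\end{equation*}

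There is no genuine analytic obstacle here: the whole argument is a concatenation of triangle inequalities together with the already-established Proposition \ref{propband}/Theorem \ref{secc}. The only points that need a word of care are the degenerate cases ($\varphi=0$, or $\Omega,\Sigma$ of infinite measure) and the harmless indexing mismatch in the statement of Theorem \ref{secc}, whose proof in fact delivers the bound on $\|P_\Omega\varphi\|_{\alpha,p}$ in terms of $\|\varphi\|_{\alpha,p}$ that we use above.
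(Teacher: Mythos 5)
Your proposal is correct and follows essentially the same route as the paper: both derive $(1-\varepsilon_\Omega)\left\|\varphi\right\|_{\alpha,p}\leq\left\|P_\Omega\varphi\right\|_{\alpha,p}$ from the concentration hypothesis and then chain this with the bound on $\left\|P_\Omega\varphi\right\|_{\alpha,p}$ coming from inequality (\ref{Pfi}) and Theorem \ref{secc}. Your reading of Theorem \ref{secc} (its proof really bounds $\left\|P_\Omega\varphi\right\|_{\alpha,p}$ by the stated constant times $\left\|\varphi\right\|_{\alpha,p}$, despite the $\psi$'s in its statement) matches the paper's intent, and your handling of the degenerate cases is a harmless addition.
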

\begin{proof}
  Let $\varphi\in L^p(\mathbb{R}_+^d)$ such that  $1\leq p\leq 2$. Like that $\varphi$ is  $\varepsilon_\Omega$-concentrated to $\Omega$  in $L^p_\alpha$-norm, then by estimation (\ref{omegaconc}) we obtain
  \begin{equation*}
      \left\|\varphi\right\|_{\alpha,p}\leq   \varepsilon_\Omega\left\|\varphi\right\|_{\alpha,p}  +\left\|P_\Omega\varphi\right\|_{\alpha,p}.
  \end{equation*}
Therefore
\begin{equation*}
      \left\|\varphi\right\|_{\alpha,p}\leq  \frac{1}{1-\varepsilon_\Omega}\left\|P_\Omega\varphi\right\|_{\alpha,p}.
  \end{equation*}
  Finally, we deduce the desired estimation by inequality (\ref{Pfi}) and Theorem \ref{secc}.
\end{proof}
\begin{rem}
  The continuous bandlimited uncertainty principle of concentration type for the $L^p_\alpha$-norm given by pervious Corollary is independent on the bandlimited function $\varphi$ for every $1\leq p\leq 2$.
\end{rem}


\begin{thebibliography}{10}

\bibitem{nahia1996spherical}
Z.~{Ben Nahia} and N.~{Ben Salem}.
\newblock {Spherical harmonics and applications associated with the {{
  W}}einstein operator.}
\newblock In {\em {Potential theory -- ICPT '94. Proceedings of the
  international conference, Kouty, Czech Republic}}.

\bibitem{brelot1978equation}
M.~Brelot.
\newblock Equation de weinstein et potentiels de{{ M}}arcel {{ R}}iesz.
\newblock In {\em S{\'e}minaire de Th{\'e}orie du Potentiel Paris, No. 3},
  pages 18--38. Springer, 1978.

\bibitem{ciatti2007heisenberg}
P.~Ciatti, F.~Ricci, and M.~Sundari.
\newblock Heisenberg-{{P}}auli-{{W}}eyl uncertainty inequalities and polynomial
  volume growth.
\newblock {\em Advances in Mathematics}, 215(2):616--625, 2007.

\bibitem{donoho1989uncertainty}
D.~L. Donoho and P.~B. Stark.
\newblock Uncertainty principles and signal recovery.
\newblock {\em SIAM Journal on Applied Mathematics}, 49(3):906--931, 1989.

\bibitem{ghobber2013uncertainty}
S.~Ghobber.
\newblock Uncertainty principles involving {{ L}}$^1$-norms for the {{D}}unkl
  transform.
\newblock {\em Integral Transforms and Special Functions}, 24(6):491--501,
  2013.

\bibitem{ghobber2014variations}
S.~Ghobber.
\newblock Variations on uncertainty principles for integral operators.
\newblock {\em Applicable Analysis}, 93(5):1057--1072, 2014.

\bibitem{laeng1999uncertainty}
E.~Laeng and C.~Morpurgo.
\newblock An uncertainty inequality involving {{ L}}$^1$-norms.
\newblock {\em Proceedings of the American Mathematical Society},
  127(12):3565--3572, 1999.

\bibitem{landau1975szego}
H.~Landau.
\newblock On {{S}}zeg{\"o}'s eingenvalue distribution theorem and
  non-{{H}}ermitian kernels.
\newblock {\em Journal d\'Analyse Math{\'e}matique}, 28(1):335--357, 1975.

\bibitem{landau1962prolate}
H.~J. Landau and H.~O. Pollak.
\newblock Prolate spheroidal wave functions, {{F}}ourier analysis and
  uncertainty-iii: The dimension of the space of essentially time-and
  band-limited signals.
\newblock {\em Bell Labs Technical Journal}, 41(4):1295--1336, 1962.

\bibitem{mejjaoli2012weinstein}
H.~Mejjaoli and A.~O.~A. Salem.
\newblock Weinstein {{G}}abor transform and applications.
\newblock {\em Advances in Pure Mathematics}, 2(03):203, 2012.

\bibitem{mejjaoli2011uncertainty}
H.~Mejjaoli and M.~Salhi.
\newblock Uncertainty principles for the {{W}}einstein transform.
\newblock {\em Czechoslovak mathematical journal}, 61(4):941--974, 2011.

\bibitem{morpurgo2001extremals}
C.~Morpurgo.
\newblock Extremals of some uncertainty inequalities.
\newblock {\em Bulletin of the London Mathematical Society}, 33(1):52--58,
  2001.

\bibitem{nahia1996mean}
Z.~B. Nahia and N.~B. Salem.
\newblock On a mean value property associated with the {{W}}einstein operator.
\newblock In {\em Proceedings of the International Conference on Potential
  Theory held in Kouty, Czech Republic (ICPT'94)}, pages 243--253, 1996.

\bibitem{salem2015heisenberg}
N.~B. Salem and A.~R. Nasr.
\newblock Heisenberg-type inequalities for the {{W}}einstein operator.
\newblock {\em Integral Transforms and Special Functions}, 26(9):700--718,
  2015.

\bibitem{zbMATH06504466}
F.~{Soltani}.
\newblock {$L^{p}$ Donoho-Stark uncertainty principles for the Dunkl transform
  on $\mathbb{R}^{\text{d}}$.}
\newblock {\em {J. Phys. Math.}}, 5:4, 2014.

\bibitem{zbMATH06692262}
F.~{Soltani} and J.~{Ghazwani}.
\newblock {A variation of the $L^p$ uncertainty principles for the Fourier
  transform.}
\newblock {\em {Proc. Inst. Math. Mech., Natl. Acad. Sci. Azerb.}},
  42(1):10--24, 2016.

\bibitem{zbMATH03367521}
E.~M. {Stein} and G.~{Weiss}.
\newblock {Introduction to {{F}}ourier analysis on Euclidean spaces.}
\newblock {Princeton Mathematical Series. Princeton, N. J.: Princeton
  University Press. X, 297 p.}, 1971.

\bibitem{weinstein1962singular}
A.~Weinstein.
\newblock Singular partial differential equations and their applications.
\newblock {\em Fluid Dynamics and Applied Mathematics}, 67:29--49, 1962.

\end{thebibliography}
\end{document}